\renewcommand{\ul}[1]{\underline{\bm{#1}}}
\tikzstyle{box} = [rectangle, rounded corners, minimum width=3cm, minimum height=1cm,text centered, draw=black]
\tikzstyle{arrow} = [thick,->,>=stealth']
\let\oldnl\nl
\newcommand{\nonl}{\renewcommand{\nl}{\let\nl\oldnl}}
\theoremstyle{plain}
\newtheorem{theorem}{Theorem}[section]
\newtheorem{definition}[theorem]{Definition}
\newtheorem{remark}[theorem]{Remark}
\theoremstyle{definition}
\newcommand{\intd}[1]{\text{d}#1}
\title{Waveform Relaxation with asynchronous time-integration}
\author{Peter Meisrimel$^{\mbox{\tiny\rm *}, \mbox{\tiny\rm 1}}$, Philipp Birken$^{\mbox{\tiny\rm 1}}$}
\begin{document}
\maketitle
\baselineskip=0.9
\normalbaselineskip
\vspace{-3pt}
\begin{center}{\footnotesize\em $^{\mbox{\tiny\rm 1}}$Centre for the
    mathematical sciences, Numerical Analysis, Lund University, Lund, Sweden\\ email: peter.meisrimel\symbol{'100}na.lu.se, philipp.birken\symbol{'100}na.lu.se, $^{\mbox{\tiny\rm *}}$Corresponding author}
\end{center}

\begin{abstract}
We consider Waveform Relaxation (WR) methods for partitioned time-integration of surface-coupled multiphysics problems. WR allows independent time-discretizations on independent and adaptive time-grids, while maintaining high time-integration orders. Classical WR methods such as Jacobi or Gauss-Seidel WR are typically either parallel or converge quickly. 

We present a novel parallel WR method utilizing asynchronous communication techniques to get both properties. Classical WR methods exchange discrete functions after time-integration of a subproblem. We instead asynchronously exchange time-point solutions during time-integration and directly incorporate all new information in the interpolants. We show both continuous and time-discrete convergence in a framework that generalizes existing linear WR convergence theory. An algorithm for choosing optimal relaxation in our new WR method is presented.

Convergence is demonstrated in two conjugate heat transfer examples. Our new method shows an improved performance over classical WR methods. In one example we show a partitioned coupling of the compressible Euler equations with a nonlinear heat equation, with subproblems implemented using the open source libraries \texttt{DUNE} and \texttt{FEniCS}.
\end{abstract}
{\it {\bf Keywords}: Asynchronous iteration, Waveform Relaxation, Dynamic Iteration, Coupled Problems, Thermal Fluid-Structure Interaction}\\
{\it {\bf Mathematics Subject Classification (2000)}: 65B99, 65F99, 65L05, 65Y05, 80M10, 80M25
}\medskip\\ 
{The authors gratefully acknowledge support from the Swedish e-science collaboration eSSENCE.}
%
%
%
\section{Introduction}
%
We consider \textit{multiphysics problems}, which are comprised of coupled systems with different physics. In particular, we consider problems with a \textit{bidirectional surface coupling. I.e., the subproblems interact} via a lower dimensional interface. Examples are within \textit{fluid structure interaction} in the simulation of blood flow in large arteries \cite{Crosetto2011}, cooling of rocket engines \cite{Kowollik2013a,Kowollik2013} or gas quenching \cite{Yarrington1994}.

We follow the \textit{partitioned approach}, which allows re-use of existing codes and solving the subproblems with different computational methods on individual grids. Our focus is time-integration. We want to solve subproblems using independent and higher-order time-discretizations on adaptive time-grids. Additionally, we want to perform time-integration of the subproblems in parallel, on top of a parallelization in space.

A technique that promises to meet all these requirements is the so called Waveform relaxation (WR). An iteration requires solving the subproblems on a time window. Thereby, continuous interface functions, obtained via suitable interpolation, are provided from the respective other problem. 
WR methods were originally introduced in \cite{Lelarasmee1982} for systems of ordinary differential equations (ODEs), and used for the first time to solve time dependent PDEs in \cite{Gander1998,Giladi2002}. WR appears in the literature under a variety of names: Waveform relaxation/iteration, dynamic iteration/relaxation and Picard(-Lindel\"{o}f) iteration.

The most common type of WR methods are Gauss-Seidel (GS) WR, solving all subproblems in sequence, and Jacobi WR, which solves all subproblems in parallel. However, the parallelism of Jacobi WR typically comes at the cost of slower convergence rates, due to less information exchange.

In this article, we construct a novel and inherently parallel WR method with more information exchange than Jacobi WR. Our ansatz to increase communication is to exchange the results of each timestep directly after computation. Any new information is directly incorporated by updating the interpolants, affecting their subsequent evaluations. This increases the information exchange and thus enhances convergence rates. We use asynchronous \textit{One-sided-communication} that allows solving subproblems in parallel and does not require function calls on the receiving processor.

WR methods require convergence acceleration to achieve fast convergence rates. We consider classical convergence acceleration by weighting updates using relaxation parameters, which are highly problem specific \cite{Janssen1997,Miekkala1987}. Other acceleration techniques involve using an additional convolution relaxation term \cite{Reichelt1995,Janssen1997} or Krylov-subspace acceleration \cite{Lumsdaine2003}, see \cite{Lumsdaine2003} for a wider overview of different acceleration techniques. However, many of these are not applicable in the partitioned approach. Black-box convergence acceleration techniques such as quasi-Newton methods can also be applied to WR and have been shown to work well \cite{Ruth2020}. 

With classical WR methods, data dependencies between the subproblems are fixed. In our new method, dependencies can vary in time and differ in each iteration, since One-sided-communication is not deterministic. We present an analytical description of our new method and convergence proofs in the continuous and time-discrete setting for linear problems. This generalizes existing WR theory \mbox{\cite{Janssen1997}}.

We present an algorithm for optimal relaxation in our new method. Here, optimal relaxation critically depends on the realized communication, which is not deterministic. Thus, we deduce the realized communication between the subsolvers in every timestep and choose suitable relaxation for each time-point solution.

We demonstrate our method using two conjugate heat transfer test cases, showing convergence in both. In the first test case, two coupled heterogeneous linear heat equations, performance results show a runtime speed-up of our new method compared to classical Jacobi and GS WR methods. The second experiment is a gas quenching test case, which consists of the compressible Euler equations coupled to a nonlinear heat equation. Here, we demonstrate a black-box coupling of heterogeneous space discretizations and subsolver codes. The fluid is solved using a finite volume discretization implemented in \texttt{DUNE} \mbox{\cite{Bastian2021}} and the solid is solved via a finite element discretization implemented in \texttt{FEniCS} \mbox{\cite{logg2012_FENICS}}.

The paper is structured as follows: We first introduce general continuous and time-discrete WR methods in Sections \ref{SEC WR CONT} and \ref{SEC WR DISCR}. In Section \ref{SEC RMA} we provide a brief overview over the principles of one-sided asynchronous communication. We formalize our new approach in Section \ref{SEC NEW} and present a first algorithm. We discuss convergence in the linear case in Section \ref{SEC AWR CONV}, showing time-discrete and continuous convergence. Our algorithm for choosing optimal relaxation for two coupled problems is shown in Section \ref{SEC AWR RELAX}. 
Finally, we show numerical results, followed by summary and conclusions.
%
\section{Continuous Waveform Relaxation}\label{SEC WR CONT}
%
Consider the following coupled system of initial value problems
\begin{align}\label{EQ WR BASE NONLIN}
\begin{split}
\dot{\bm{v}}(t) & = \bm{g}(t, \bm{v}(t), \bm{w}(t)), \qquad \bm{v}(0) = \bm{v}_0 \in \mathbb{R}^{d_v},\\
\dot{\bm{w}}(t) & = \bm{h}(t, \bm{v}(t), \bm{w}(t)), \qquad \bm{w}(0) = \bm{w}_0 \in \mathbb{R}^{d_w},
\end{split}
\quad t \in [0, T_f < \infty].
\end{align}
We now define a general continuous Waveform Relaxation method. Given $\bm{v}^{(k)}$ and $\bm{w}^{(k)}$, a single iteration consists of solving two differential equations and performing two relaxation steps as follows:
\begin{subequations}\label{EQ WR ITER}
\begin{align}
\dot{\hat{\bm{v}}}^{(k+1)}(t) 
& = \bm{g}\left(t, \hat{\bm{v}}^{(k+1)}(t), \bm{w}_{\,*}^{(k)}(t)\right), \quad \hat{\bm{v}}^{(k+1)}(0) = \bm{v}_0,
\quad t \in [0, T_f], \label{EQ WR ITER 1}\\
\bm{v}^{(k+1)}
& = \bm{v}^{(k)} + \boldsymbol{\Theta}_v (\hat{\bm{v}}^{(k+1)} -\bm{v}^{(k)}), \label{EQ WR RELAX 1}\\
\dot{\hat{\bm{w}}}^{(k+1)}(t) 
& = \bm{h}\left(t, \bm{v}_{\,*}^{(k)}(t), \hat{\bm{w}}^{(k+1)}(t)\right), \quad \hat{\bm{w}}^{(k+1)}(0) = \bm{w}_0,
\quad t \in [0, T_f], \label{EQ WR ITER 2} \\
\bm{w}^{(k+1)}
& = \bm{w}^{(k)} + \boldsymbol{\Theta}_w(\hat{\bm{w}}^{(k+1)} - \bm{w}^{(k)}), \label{EQ WR RELAX 2}
\end{align}
\end{subequations}
with nonsingular diagonal matrices $\boldsymbol{\Theta}_v \in \mathbb{R}^{d_v \times d_v}$, $\boldsymbol{\Theta}_w \in \mathbb{R}^{d_w \times d_w}$ for relaxation. Extensions to more than two systems are straight-forward, c.f. \cite{Vandewalle2013}. The specific WR method is defined by the choices for $\bm{v}_{\,*}^{(k)}$ and $\bm{w}_{\,*}^{(k)}$. The trivial initial guesses for $\bm{v}^{(0)}$ and $\bm{w}^{(0)}$ are to extrapolate the initial value.

The most common WR methods are \textit{Gauss-Seidel} (GS) and \textit{Jacobi} WR, c.f., \cite{White1987a,Vandewalle2013}. Continuous GS WR is given by 
\begin{equation}\label{EQ GS WR}
\bm{v}_{\,*}^{(k)} = \bm{v}^{(k+1)}
\quad \text{and} \quad
\bm{w}_{\,*}^{(k)} = \bm{w}^{(k)}.
\end{equation}
GS WR is sequential, which makes it sensitive to the order of the systems in \eqref{EQ WR BASE NONLIN}.

Jacobi WR is given by 
\begin{equation}\label{EQ JAC WR}
\bm{v}_{\,*}^{(k)} = \bm{v}^{(k)}
\quad \text{and} \quad
\bm{w}_{\,*}^{(k)} = \bm{w}^{(k)},
\end{equation}
which allows parallel computation of \eqref{EQ WR ITER 1}, \eqref{EQ WR RELAX 1} and \eqref{EQ WR ITER 2}, \eqref{EQ WR RELAX 2}.

The iteration is commonly terminated if
\begin{equation}\label{EQ WR TERMINATION CRIT}
\| \bm{y}^{(k+1)}(T_f) - \bm{y}^{(k)}(T_f) \| < \| \bm{y}^{(k+1)}(0)\|\, TOL_{WR},
\end{equation}
i.e., via the relative update measured at $t = T_f$, where updates tend to be the largest. Here, $\bm{y}$ is a subset of the unknowns of $\bm{v}$ resp. $\bm{w}$ that $\bm{h}$ resp. $\bm{g}$ in \eqref{EQ WR BASE NONLIN} depends on.
%
\section{Time-discrete Waveform Relaxation}\label{SEC WR DISCR}
%
We enable the use of independent time-grids and time-integration schemes by using interpolants of the respective discrete solutions in the right-hand sides of \mbox{\eqref{EQ WR ITER 1}} and \mbox{\eqref{EQ WR ITER 2}}.

We denote discrete solutions by 
\begin{equation*}
\ul{\bm{v}}^{(k)} := \{ \bm{v}^{(k)}_n\}_{n = 0, \ldots, N_v^{(k)}}
\quad \text{and} \quad
\ul{\bm{w}}^{(k)} := \{ \bm{w}^{(k)}_n\}_{n = 0, \ldots, N_w^{(k)}},
\end{equation*}
on time-grids $0 = t_0^{(v),(k)} < \ldots < t_{N_v^{(k)}}^{(v),(k)} = T_f$ and $0 = t_0^{(w),(k)} < \ldots < t_{N_w^{(k)}}^{(w),(k)} = T_f$. The interpolants are as follows: 
\begin{equation*}
\mathcal{I}(\ul{\bm{v}}^{(k)}) \in \mathcal{C}\left([0, T_f]; \mathbb{R}^{d_v}\right)
\quad \text{and} \quad
\mathcal{I}(\ul{\bm{w}}^{(k)}) \in \mathcal{C}\left([0, T_f]; \mathbb{R}^{d_w}\right).
\end{equation*}
Here, we omit the time-grids as input to the interpolants for ease of notation. We obtain a time-discrete WR method by using discrete time-integration to solve
\begin{subequations}\label{EQ WR DISCRETE}
\begin{align}
\dot{\hat{\bm{v}}}^{(k+1)}(t) 
& = \bm{g}\left(t, \hat{\bm{v}}^{(k+1)}(t), \mathcal{I}(\ul{\bm{w}}_{\,*}^{(k)})(t)\right),
\quad \hat{\bm{v}}^{(k+1)}(0) = \bm{v}_0,
\quad t \in [0, T_f], \label{EQ WR DISCRETE 1}\\
\dot{\hat{\bm{w}}}^{(k+1)}(t) 
& = \bm{h}\left(t, \mathcal{I}(\ul{\bm{v}}_{\,*}^{(k)})(t), \hat{\bm{w}}^{(k+1)}(t)\right),
\quad \hat{\bm{w}}^{(k+1)}(0) = \bm{w}_0,
\quad t \in [0, T_f]. \label{EQ WR DISCRETE 2}
\end{align}
\end{subequations}
Here, one chooses $\ul{\bm{v}}_{\,*}^{(k)}$, $\ul{\bm{w}}_{\,*}^{(k)}$ in accordance with e.g., \eqref{EQ GS WR} or \eqref{EQ JAC WR}. We consider \textit{polynomial interpolation}. 

Relaxation is performed in the discrete data-points as follows
\begin{subequations}
\begin{align}
\bm{v}^{(k+1)}_n &
= \left(\bm{I} - \boldsymbol{\Theta_v} \right) \mathcal{I}(\ul{\bm{v}}^{(k)})(t_n^{(v),(k+1)})
+ \boldsymbol{\Theta_v}\hat{\bm{v}}^{(k+1)}_n
,
\quad n = 1, \ldots, N_v^{(k+1)},
\label{EQ WR RELAX DISCR V}
\\
\bm{w}^{(k+1)}_n &
= \left(\bm{I} - \boldsymbol{\Theta_w} \right) \mathcal{I}(\ul{\bm{w}}^{(k)})(t_n^{(w),(k+1)})
+ \boldsymbol{\Theta_w}\hat{\bm{w}}^{(k+1)}_n
,
\quad n = 1, \ldots, N_w^{(k+1)}.
\label{EQ WR RELAX DISCR W}
\end{align}
\end{subequations}
See Algorithm \ref{ALG WR DISCR JAC} for a pseudocode of time-discrete Jacobi WR. We use interpolation with evaluation at run-time. I.e., we define the interpolants once using fixed data-structures for $\ul{\bm{v}}_{\,*}$ and $\ul{\bm{w}}_{\,*}$. We then update $\ul{\bm{v}}_{\,*}$ and $\ul{\bm{w}}_{\,*}$ during the iteration, affecting the results of subsequent evaluations. Our discrete initial guesses for $\bm{v}^{(0)}$ and $\bm{w}^{(0)}$ are to extrapolate the initial value.

\begin{algfloat}[h!]
\small
\begin{center}
\Statex \textbf{Pseudocode: Time-discrete Jacobi WR}
\end{center}
\begin{tabular}{c|c}
\begin{minipage}{.48\textwidth}
\begin{algorithmic}[1]
\Statex \textbf{Process} $0$ (\textbf{p}$0$)
\State $\ul{\bm{w}}^{(0)}$ discrete initial guess
\State Initialize $\ul{\bm{w}}_{\,*}$ and $\mathcal{I}(\ul{\bm{w}}_{\,*})$
\For{$k = 0, \ldots, k_{\max}-1$}
\State $\ul{\bm{w}}_{\,*} \gets \ul{\bm{w}}^{(k)}$ Update interpolant
\State $\ul{\hat{\bm{v}}}^{(k+1)} \gets$ Discr. solve \eqref{EQ WR DISCRETE 1}
\State $\ul{\bm{v}}^{(k+1)} \gets$ Relaxation \eqref{EQ WR RELAX DISCR V}
\State $\ul{\bm{v}}^{(k+1)} \rightarrow$ Send to \textbf{p}$1$
\State $\ul{\bm{w}}^{(k+1)} \gets$ Recv. from \textbf{p}$1$
\State Check \eqref{EQ WR TERMINATION CRIT}, \texttt{break} if \texttt{true}
\EndFor
\end{algorithmic}
\end{minipage}
&
\begin{minipage}{.48\textwidth}
\begin{algorithmic}[1,nolinenumbers]
\Statex \textbf{Process} $1$ (\textbf{p}$0$)
\State $\ul{\bm{v}}^{(0)}$ discrete initial guess
\State Initialize $\ul{\bm{v}}_{\,*}$ and $\mathcal{I}(\ul{\bm{v}}_{\,*})$
\For{$k = 0, \ldots, k_{\max}-1$}
\State $\ul{\bm{v}}_{\,*} \gets \ul{\bm{v}}^{(k)}$ Update interpolant
\State $\ul{\hat{\bm{w}}}^{(k+1)} \gets$ Discr. solve \eqref{EQ WR DISCRETE 2}
\State $\ul{\bm{w}}^{(k+1)} \gets$ Relaxation \eqref{EQ WR RELAX DISCR W}
\State $\ul{\bm{w}}^{(k+1)} \rightarrow$ Send to \textbf{p}$1$
\State $\ul{\bm{v}}^{(k+1)} \gets$ Recv. from \textbf{p}$1$
\State Check \eqref{EQ WR TERMINATION CRIT}, \texttt{break} if \texttt{true}
\EndFor
\end{algorithmic}
\end{minipage}\\\\\hline
\end{tabular}
\caption{Pseudocode of the time-discrete Jacobi WR method. Obtaining the initial guesses may involve communication. Here, $\ul{\bm{v}}_{\,*}^{(k)}$ and $\ul{\bm{w}}_{\,*}^{(k)}$ in \eqref{EQ WR DISCRETE} are defined by \eqref{EQ JAC WR}.}
\label{ALG WR DISCR JAC}
\end{algfloat}
%
\section{One-sided communication}\label{SEC RMA}
%
The standard in parallel computations is Point-to-Point communication, primarily using \texttt{MPI\_Send} and \texttt{MPI\_Recv}. Here, every \texttt{MPI\_Send} requires a matching \texttt{MPI\_Recv} function call.
This works well for algorithms with fixed synchronization points, e.g., the termination check in Algorithm \ref{ALG WR DISCR JAC}. Here, we want to perform time-integration of the subproblems in parallel on independent grids, exchanging information after each timestep.

We solve this by using One-sided communication, also called remote memory access (RMA) \cite[Chpt.11]{MessagePassingInterfaceForum2012}, which is asynchronous. Since it is non-standard, we give a brief overview over the associated concepts and methods. It is also worth noting that RMA via \texttt{MPI} is not available in \texttt{mpi4py}, requiring an implementation in \texttt{C++} or \texttt{Fortran}.

\texttt{MPI\_Window} objects and their allocated memory facilitate RMA. One accesses the memory associated with a given window via \texttt{MPI\_Get} (read) or \texttt{MPI\_Put} (write) operations. The memory of a target window is only accessible during access periods. We use \textit{passive target synchronization}, in which a processor creates an access period on a target window (including windows on its own memory) by locking and unlocking the target window using \texttt{MPI\_Win\_(un)lock}. This does not require active participation in terms of function calls on the processor of the targeted window. An additional parameter in the \texttt{MPI\_Win\_(un)lock} call can specify the type of lock to ensure exclusive access for writing or shared access for reading, preventing access conflicts. The order in which locks are obtained is determined at runtime and can differ with each execution, which means RMA is not deterministic.

The RMA memory model uses public and private copies of windows. All \texttt{MPI\_Put} and \texttt{MPI\_Get} operations act on the public copy. However, the present variable values are given by the private window copies. As such, a process needs to synchronize its public and private window copies to obtain any updates received via \texttt{MPI\_Put} operations. This is done explicitly using \texttt{MPI\_Win\_sync}. Some RMA functions perform this synchronization implicitly, note that \texttt{MPI\_Barrier} does not. 
%
\section{Waveform relaxation with asynchronous time-integration}\label{SEC NEW}
%
While Jacobi WR is parallel, convergence rates are typically slower than those of GS WR, due to less information exchange. The goal is to develop a parallel WR method with more information exchange than Jacobi WR and thus a faster convergence rate. Our ansatz is to use asynchronous communication already during time-integration to increase the information exchange.

We start with the parallel Algorithm \ref{ALG WR DISCR JAC} and modify it to increase communication. First, we expose the interpolant data $\ul{\bm{v}}_{\,*}$ and $\ul{\bm{w}}_{\,*}$ via \texttt{MPI\_Window} objects. Thus, remote updates are directly incorporated in subsequent interpolant evaluations. Next, we move relaxation and communication to the time-step level, i.e., into the time-stepping loop. We asynchronously communicate new time-point solutions, remotely updating the corresponding values in $\ul{\bm{v}}_{\,*}$ resp. $\ul{\bm{w}}_{\,*}$ on the other process, using \texttt{MPI\_Put}.

Algorithm \ref{ALG WR NEW BASE} shows the pseudocode for two coupled problems, with a different number of timesteps $N_v \neq N_w$ for each subproblem and with constant relaxation. In Section \ref{SEC AWR RELAX} we present an algorithm with variable relaxation, based on the realized communication.

\begin{algfloat}[h!]
\small
\begin{center}
\Statex \textbf{Pseudocode: WR with asynchronous time-integration}
\end{center}
\begin{tabular}{c|c}
\begin{minipage}{.46\textwidth}
\begin{algorithmic}[1]
\Statex \textbf{Process} $0$ (\textbf{p}$0$)
\State $\ul{\bm{w}}^{(0)}$ discrete initial guess
\State Initialize $\ul{\bm{w}}_{\,*} \gets \ul{\bm{w}}^{(0)}$ and $\mathcal{I}(\ul{\bm{w}}_{\,*})$ \label{LINE INTERPOLANT}
\State Expose $\ul{\bm{w}}_{\,*}$ via \texttt{MPI\_Window}
\For{$k = 0, \ldots, k_{\max}-1$}
\For{$n = 1, \ldots, N_v$}
\State $\hat{\bm{v}}^{(k+1)}_n \gets$ Solve* \eqref{EQ WR DISCRETE 1}
\State $\bm{v}^{(k+1)}_n \gets$ Relaxation \eqref{EQ WR RELAX DISCR V}
\State $\bm{v}^{(k+1)}_n \rightarrow$ \texttt{MPI\_Put} to $\ul{\bm{v}}_{\,*}$ on \textbf{p}$1$
\EndFor
\State Sync. + Termination check
\EndFor
\end{algorithmic}
\end{minipage}
&
\begin{minipage}{.46\textwidth}
\begin{algorithmic}[1,nolinenumbers]
\State \textbf{Process} $1$ (\textbf{p}$1$)
\State $\ul{\bm{v}}^{(0)}$ discrete initial guess
\State Initialize $\ul{\bm{v}}_{\,*} \gets \ul{\bm{v}}^{(0)}$ and $\mathcal{I}(\ul{\bm{v}}_{\,*})$
\State Expose $\ul{\bm{v}}_{\,*}$ via \texttt{MPI\_Window}
\For{$k = 0, \ldots, k_{\max}-1$}
\For{$n = 1, \ldots, N_w$}
\State $\hat{\bm{w}}^{(k+1)}_n \gets$ Solve* \eqref{EQ WR DISCRETE 2}
\State $\bm{w}^{(k+1)}_n \gets$ Relaxation \eqref{EQ WR RELAX DISCR W}
\State $\bm{w}^{(k+1)}_n \rightarrow$ \texttt{MPI\_Put} to $\ul{\bm{w}}_{\,*}$ on \textbf{p}$0$
\EndFor
\State Sync. + Termination check
\EndFor
\end{algorithmic}
\end{minipage}\\\\\hline
\end{tabular}
\caption{New proposed method using asynchronous communication during time-integration. Obtaining the initial guesses may involve communication. Solve* denotes a single discrete timestep in solving \eqref{EQ WR DISCRETE 1} resp. \eqref{EQ WR DISCRETE 2}, with the interpolants defined in Line \ref{LINE INTERPOLANT} in the right-hand sides.}
\label{ALG WR NEW BASE}
\end{algfloat}

Our new method is defined by 
\begin{equation}\label{EQ AWR VSTAR}
(\ul{\bm{v}}_{\,*}^{(k)})_n = 
\begin{cases}
\bm{v}_{n}^{(k+1)}, & \text{if available} \\
\bm{v}_{n}^{(k)}, & \text{else}
\end{cases},
\end{equation}
$\ul{\bm{w}}_{\,*}^{(k)}$ analogous. Availability is determined at run-time, by the present data when evaluating the interpolant. Since asynchronous communication is not deterministic, $\ul{\bm{v}}_{\,*}^{(k)}$ and $\ul{\bm{w}}_{\,*}^{(k)}$ can vary for different timesteps and with $k$. Due to remote updates, evaluations of the interpolants for the same $t$ and $k$, but at different real-life times can differ.

The corresponding continuous WR method is \eqref{EQ WR ITER}, with $\bm{v}_{\,*}^{(k)}$ and $\bm{w}_{\,*}^{(k)}$ varying in both time and with $k$.
%
\subsection{Variable relaxation}\label{SEC AWR VAR RELAX}
%
With our new method, $\bm{v}_{\,*}^{(k)}$ and $\bm{w}_{\,*}^{(k)}$ vary with $t$ and $k$. It is possible that we obtain Jacobi or GS WR. Since optimal relaxation matrices can notably differ for Jacobi and GS WR, constant relaxation is unlike to achieve optimal convergence acceleration in our new method. We instead consider relaxation varying with $t$ and $k$:
\begin{align}\label{EQ VAR RELAX}
\begin{split}
\bm{v}^{(k+1)}(t)
& = \bm{v}^{(k)}(t) + \boldsymbol{\Theta}_v^{(k+1)}(t) \left(\hat{\bm{v}}^{(k+1)}(t) -\bm{v}^{(k)}(t)\right),\\
\bm{w}^{(k+1)}(t)
& = \bm{w}^{(k)}(t) + \boldsymbol{\Theta}_w^{(k+1)}(t)\left(\hat{\bm{w}}^{(k+1)}(t) - \bm{w}^{(k)}(t)\right),
\end{split}
\end{align}
where $\boldsymbol{\Theta}_v^{(k+1)}(t)$ and $\boldsymbol{\Theta}_w^{(k+1)}(t)$ are non-singular diagonal matrices. We first discuss convergence results, since these provide us the necessary insight on how to choose relaxation. Following that, we extend Algorithm \ref{ALG WR NEW BASE} to variable relaxation in Section \ref{SEC AWR RELAX}. There, we present a concrete approach to determining $\bm{v}_{\,*}^{(k)}$ and $\bm{w}_{\,*}^{(k)}$ at runtime, to then choose appropriate relaxation for each timestep.
%
\section{Convergence Analysis}\label{SEC AWR CONV}
%
Similar to \cite{Nevanlinna1989,Nevanlinna1989a,Janssen1996a,Janssen1996} we analyze convergence in the linear setting. We first present the established WR theory, before extending it to include our new method. Consider the following \textit{monolithic system}
\begin{equation}\label{EQ WR BASE LIN GENERAL}
\bm{B} \dot{\bm{u}}(t) + \bm{A} \bm{u}(t) = \bm{f}(t)
, \quad
\bm{u}(0) = \bm{u}_0 \in \mathbb{R}^{d}
, \quad
t \in [0, T_f < \infty]
, \quad 
\bm{B} \text{ nonsingular}
,
\end{equation}
with $\bm{B}$, $\bm{A}$ $\in \mathbb{R}^{d \times d}$, and $\bm{f}$ Lipschitz-continuous.
Here, one can express classical WR methods such as Jacobi and GS WR via constant splittings \cite{Nevanlinna1989,Janssen1996a}
\begin{equation}\label{EQ WR SPLIT CONST}
\bm{B} = \bm{M}_B - \bm{N}_B
\quad \text{and} \quad
\bm{A} = \bm{M}_A - \bm{N}_A,
\quad 
\bm{M}_B \text{ nonsingular},
\end{equation}
and the iteration
\begin{align}\label{EQ WR ITER CONST}
\begin{split}
\bm{M}_B \dot{\bm{u}}^{(k+1)}(t) + \bm{M}_A \bm{u}^{(k+1)}(t)
= \bm{N}_B \dot{\bm{u}}^{(k)}(t) + \bm{N}_A \bm{u}^{(k)}(t) + \bm{f}(t),
\\
\bm{u}^{(k+1)}(0) = \bm{u}_0, 
\quad t \in [0, T_f].
\end{split}
\end{align}
The particular splitting \mbox{\eqref{EQ WR SPLIT CONST}} depends on the WR method, e.g., Jacobi or GS WR, and includes constant relaxation \mbox{\eqref{EQ WR RELAX 1}}, \mbox{\eqref{EQ WR RELAX 2}}. We omit dependencies of the splitting matrices on the relaxation matrices in \mbox{\eqref{EQ WR SPLIT CONST}} for readability.

Consider for example the following system of ODEs:
\begin{equation*}
\underbrace{\begin{pmatrix}
\bm{B}_1 & \bm{B}_2 \\ \bm{B}_3 & \bm{B}_4
\end{pmatrix}}_{\bm{B}}
\underbrace{\begin{pmatrix}
\dot{\bm{v}}(t) \\ \dot{\bm{w}}(t)
\end{pmatrix}}_{\dot{\bm{u}}(t)}
+ 
\underbrace{\begin{pmatrix}
\bm{A}_1 & \bm{A}_2 \\ \bm{A}_3 & \bm{A}_4
\end{pmatrix}}_{\bm{A}}
\underbrace{\begin{pmatrix}
\bm{v}(t) \\ \bm{w}(t)
\end{pmatrix}}_{\bm{u}(t)}
=
\underbrace{
\begin{pmatrix}
\bm{f}_1(t) \\ \bm{f}_2(t)
\end{pmatrix}}_{\bm{f}(t)},
\quad
\underbrace{
\begin{pmatrix}
\bm{v}(0) \\ \bm{w}(0)
\end{pmatrix}}_{\bm{u}(0)}
=
\underbrace{
\begin{pmatrix}
\bm{v}_0 \\ \bm{w}_0
\end{pmatrix}}_{\bm{u}_0}
,
\end{equation*}
with $t \in [0, T_f]$ and $\bm{B}_1$, $\bm{B}_4$ nonsingular. Then, Jacobi WR, without relaxation, is given by
\begin{align*}
\begin{split}
&
\begin{pmatrix}
\bm{B}_1 & \bm{0} \\
\bm{0} & \bm{B}_4
\end{pmatrix}
\dot{\bm{u}}^{(k+1)}(t)
+
\begin{pmatrix}
\bm{A}_1 & \bm{0} \\
\bm{0} & \bm{A}_4
\end{pmatrix}
\bm{u}^{(k+1)}(t)
\\
= &
\begin{pmatrix}
\bm{0} & -\bm{B}_2\\
-\bm{B}_3 & \bm{0}
\end{pmatrix}
\dot{\bm{u}}^{(k)}(t)
+
\begin{pmatrix}
\bm{0} & -\bm{A}_2\\
-\bm{A}_3 & \bm{0}
\end{pmatrix}
\bm{u}^{(k)}(t)
+ 
\bm{f}(t),
\end{split}
\end{align*}
with $t \in [0, T_f]$ and $\bm{u}^{(k+1)}(0) = \bm{u}_0$. The inherent parallelism of this method is reflected by the block-diagonal structure of the matrices on the left-hand side.

One determines the convergence properties of continuous WR methods by analyzing the iteration \mbox{\eqref{EQ WR ITER CONST}}. Similarly, time-discrete WR methods are described via time-discretizations of \mbox{\eqref{EQ WR ITER CONST}}.
%
\subsection{Time-discrete WR with asynchronous communication}
%
We now consider the time-discrete case. Similar to \cite{Janssen1996,Nevanlinna1989a}, we use convergent and zero-stable $m$-step linear multistep methods (LMM), see e.g. \cite[Chap.3.2]{haiwan:93}, on matching time-grids with the constant step-size $\Delta t = T_f/N$. A $m$-step LMM applied to \eqref{EQ WR BASE LIN GENERAL} is
\begin{equation}\label{EQ LMM}
\sum_{\ell=0}^m \left( a_\ell \bm{B} + b_\ell \Delta t \bm{A} \right)\bm{u}_{n+\ell} 
= \Delta t \sum_{\ell=0}^m b_\ell\, \bm{f}(t_{n + \ell}),
\quad n = m \ldots, N,
\end{equation}
which defines the \textit{discrete monolithic solution}. A particular LMM is defined by its coefficients $a_\ell, b_\ell$ and requires starting values $\bm{u}_\ell$, $\ell = 0, \ldots, m-1$.

Classical WR methods such as GS and Jacobi WR can be described by time-discretizations of \mbox{\eqref{EQ WR ITER CONST}}. Using the same LMM, this is
\begin{align*}
\begin{split}
\sum_{\ell=0}^m \left(a_\ell \bm{M}_{B} + b_\ell \Delta t\, \bm{M}_{A}\right) \bm{u}_{n+\ell}^{(k+1)}
=
\sum_{\ell=0}^m \left(a_\ell\bm{N}_{B} + b_\ell \Delta t\, \bm{N}_{A}\right)\bm{u}^{(k)}_{n+\ell}
 + b_\ell \Delta t\, \bm{f}(t_{n+\ell}), 
\quad n = m \ldots, N,
\end{split}
\end{align*}
with starting values $\bm{u}_\ell^{(k)} = \bm{u}_\ell$, $\ell = 0, \ldots, m-1$, $\forall k \geq 0$.

We can describe Algorithm \mbox{\ref{ALG WR NEW BASE}} by an analogous iteration, where the splitting matrices \mbox{\eqref{EQ WR SPLIT CONST}} can differ for each $\ell$, $n$ and $k$. That is, we consider
\begin{align}\label{EQ AWR DISCRETE ITER LMM}
\begin{split}
&\sum_{\ell=0}^m \left(a_\ell \bm{M}_{B,n,\ell}^{(k+1)} + b_\ell \Delta t\, \bm{M}^{(k+1)}_{A, n, \ell}\right) \bm{u}_{n+\ell}^{(k+1)}\\
= &
\sum_{\ell=0}^m \left(a_\ell\bm{N}_{B,n,\ell}^{(k+1)} + b_\ell \Delta t\, \bm{N}^{(k+1)}_{A, n, \ell}\right)\bm{u}^{(k)}_{n+\ell}
 + b_\ell \Delta t\, \bm{f}(t_{n+\ell}),
\quad n = m \ldots, N.
\end{split}
\end{align}
Here, the concrete matrices $\bm{M}_{B,n,\ell}^{(k+1)}$ and $\bm{M}_{A,n,\ell}^{(k+1)}$ are, for each $\ell$, $n$ and $k$, determined by $\ul{\bm{v}}_*^{(k)}$ and $\ul{\bm{w}}_*^{(k)}$, as emerging from the realized communication in e.g., Algorithm \mbox{\ref{ALG WR NEW BASE}}, including relaxation. These matrices fulfill the splitting property $\bm{B} = \bm{M}_{B,n,\ell}^{(k+1)} - \bm{N}_{B,n,\ell}^{(k+1)}$, $\bm{A}$ analogous, by which the discrete monolithic solution defined by \mbox{\eqref{EQ LMM}} is a fixed point of the discrete WR method defined by \mbox{\eqref{EQ AWR DISCRETE ITER LMM}}.

We define the discrete WR error as
\begin{equation*}
\bm{e}_n^{(k)} := \bm{u}_n - \bm{u}_n^{(k)}.
\end{equation*}
Taking the difference between \eqref{EQ LMM} and \eqref{EQ AWR DISCRETE ITER LMM} shows that it fulfills
\begin{equation}\label{EQ DISCRETE ITER LMM ERROR}
\sum_{\ell=0}^m \bm{C}^{(k+1)}_{n, \ell} \bm{e}_{n+\ell}^{(k+1)}
=
\sum_{\ell=0}^m \bm{D}^{(k+1)}_{n, \ell} \bm{e}^{(k)}_{n+\ell},
\end{equation}
with
\begin{equation*}
\bm{C}^{(k)}_{n, \ell} 
:= a_\ell \bm{M}_{B,n,\ell}^{(k)} + b_\ell\, \Delta t\bm{M}^{(k)}_{A, n, \ell}
,\quad 
\bm{D}^{(k)}_{n, \ell}
:= 
a_\ell\bm{N}_{B,n,\ell}^{(k)} + b_\ell\,\Delta t \bm{N}^{(k)}_{A, n, \ell}.
\end{equation*}
The starting values $\bm{u}^{(k+1)}_\ell$ define the starting errors $\bm{e}^{(k+1)}_{\ell}$, $\ell = 0, \ldots, m-1$.

In the following theorem we show convergence in the form of $\| \bm{e}^{(k)}_{n} \| \rightarrow 0$, for $k \rightarrow \infty$, for all $n = m, \ldots, N$. It is an extension of the convergence result from \mbox{\cite{Janssen1996}}. There, we have constant splittings, whereas in our method the splittings vary with $n$, $\ell$ and $k$. Here $\| \cdot \| : \mathbb{R}^d \rightarrow \mathbb{R}$ is a norm and we similar use $\| \cdot \|$ to denote the induced matrix norm.
\begin{theorem}\label{THRM AWR DISCRETE CONV}
Let the splittings
\begin{equation*}
\bm{B} = \bm{M}_{B,n,\ell}^{(k)} - \bm{N}_{B, n, \ell}^{(k)}
\quad \text{and} \quad
\bm{A} = \bm{M}_{A,n,\ell}^{(k)} - \bm{N}_{A, n, \ell}^{(k)}
, \quad 
\bm{M}_{B, n, \ell}^{(k)}
\text{ nonsingular},
\end{equation*}
with
\begin{equation}\label{EQ AWR DISCRETE THRM CONDS}
\bm{C}_{n,m}^{(k)} \text{ nonsingular}
,\quad
\| {\bm{C}_{n,m}^{(k)}}^{-1} \bm{D}_{n,m}^{(k)}\| < 1,
\quad n = m, \ldots, N
\end{equation}
and an initial guess 
\begin{equation*}
\ul{e}^{(0)} 
:= 
\left( {\bm{e}_m^{(0)}}^T, \ldots, {\bm{e}_N^{(0)}}^T \right)^T
\in \mathbb{R}^{d(N-m + 1)}
\end{equation*}
be given. Then, the solution of the discrete WR method defined by \mbox{\eqref{EQ AWR DISCRETE ITER LMM}} converges to the solution of \mbox{\eqref{EQ LMM}}.

%
\end{theorem}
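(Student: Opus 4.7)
My plan is to reduce \eqref{EQ DISCRETE ITER LMM ERROR} to a forward recursion in $n$ by solving for the highest-index error, and then establish convergence in $k$ by induction on $n$. Since $\bm{C}_{n,m}^{(k+1)}$ is assumed nonsingular, I can isolate $\bm{e}_{n+m}^{(k+1)}$ as
\begin{equation*}
\bm{e}_{n+m}^{(k+1)} = -\sum_{\ell=0}^{m-1}{\bm{C}_{n,m}^{(k+1)}}^{-1}\bm{C}_{n,\ell}^{(k+1)}\,\bm{e}_{n+\ell}^{(k+1)} + \sum_{\ell=0}^{m}{\bm{C}_{n,m}^{(k+1)}}^{-1}\bm{D}_{n,\ell}^{(k+1)}\,\bm{e}_{n+\ell}^{(k)}.
\end{equation*}
The key structural observation is that the coefficient of $\bm{e}_{n+m}^{(k)}$ is precisely ${\bm{C}_{n,m}^{(k+1)}}^{-1}\bm{D}_{n,m}^{(k+1)}$, whose norm is controlled by hypothesis \eqref{EQ AWR DISCRETE THRM CONDS}.

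Next I would induct on the time-step index from $n=0$ up to $n=N-m$, using that the starting values are exact, so $\bm{e}_\ell^{(k)} = \bm{0}$ for $\ell = 0,\dots,m-1$ and all $k$. For the base case $n=0$, every term involving a starting-value error drops out, leaving $\bm{e}_m^{(k+1)} = {\bm{C}_{0,m}^{(k+1)}}^{-1}\bm{D}_{0,m}^{(k+1)}\,\bm{e}_m^{(k)}$, which contracts in norm and therefore drives $\bm{e}_m^{(k)}\to\bm{0}$. For the inductive step, I assume $\bm{e}_j^{(k)}\to\bm{0}$ for $j=m,\dots,n+m-1$, collect every term that does not involve $\bm{e}_{n+m}^{(k)}$ into a sequence $\bm{\eta}^{(k+1)}$ that vanishes by induction (using the triangle inequality together with uniform-in-$k$ bounds on the coefficient matrices), and obtain
\begin{equation*}
\|\bm{e}_{n+m}^{(k+1)}\| \;\leq\; \rho_n\,\|\bm{e}_{n+m}^{(k)}\| + \|\bm{\eta}^{(k+1)}\|, \qquad \rho_n<1.
\end{equation*}
The elementary Lyapunov fact that $x_{k+1}\leq \rho x_k + \eta_k$ with $\rho<1$ and $\eta_k\to 0$ implies $x_k\to 0$ then closes the induction.

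The main obstacle I anticipate is upgrading the pointwise-in-$k$ bound in \eqref{EQ AWR DISCRETE THRM CONDS} to a uniform contraction constant $\rho_n<1$, as is needed for the Lyapunov step, and likewise bounding the auxiliary coefficients ${\bm{C}_{n,m}^{(k+1)}}^{-1}\bm{C}_{n,\ell}^{(k+1)}$ and ${\bm{C}_{n,m}^{(k+1)}}^{-1}\bm{D}_{n,\ell}^{(k+1)}$ uniformly in $k$. With constant relaxation this is essentially free: the availability flags in \eqref{EQ AWR VSTAR} produce only finitely many distinct splittings per timestep, so the relevant suprema are maxima over finite sets and remain strictly below $1$. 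For the variable-relaxation variant of Section \ref{SEC AWR VAR RELAX} I would instead assume the relaxation matrices lie in a compact set on which the map sending the splitting data to $\|{\bm{C}_{n,m}^{-1}\bm{D}_{n,m}}\|$ stays strictly below $1$, and then invoke continuity and compactness to extract the uniform bound. This compactness/uniformity argument is where the generalization of the Janssen--Vandewalle analysis to the asynchronous setting requires the most care, since it is exactly the place where the classical constant-splitting argument would otherwise apply verbatim.
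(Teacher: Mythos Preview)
Your approach is essentially the paper's: forward substitution in the block lower-triangular error system, then induction on the time index using the contraction $\|{\bm{C}_{n,m}^{(k)}}^{-1}\bm{D}_{n,m}^{(k)}\|<1$ at each level (the paper packages the substitution as an explicit block inverse of the all-at-once matrix $\ul{C}^{(k)}$ and writes out the forward elimination, but the content is identical). Your discussion of the uniformity-in-$k$ issue is actually more careful than the paper's own proof, which simply asserts $\bm{e}_n^{(k)}\to 0$ from the per-$k$ bound without noting that a uniform contraction constant is needed; your observation that only finitely many distinct splittings arise under constant relaxation (so the relevant suprema are maxima over finite sets) is exactly what closes this gap, and it applies equally to the paper's argument.
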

\begin{proof}
We consider the so called "all-at-once system" system, which is the system for all timesteps of a given iteration. This is
\begin{equation*}
\ul{C}^{(k+1)} \ul{e}^{(k+1)} = \ul{D}^{(k+1)} \ul{e}^{(k)},
\end{equation*}
with 
\begin{align*}
\ul{C}^{(k)} & := 
\begin{pmatrix}
\bm{C}^{(k)}_{m, m} & \bm{0} & \ldots& \ldots & \ldots& \bm{0}\\
\vdots & \bm{C}^{(k)}_{m+1, m} &\ddots & & & \vdots\\
\bm{C}^{(k)}_{m^2, 0} & & \ddots & \ddots & & \vdots\\
\bm{0} & \ddots & & \ddots & \ddots & \vdots\\
\vdots & \ddots & \ddots & & \ddots & \bm{0}\\
\bm{0} & \ldots & \bm{0} & \bm{C}^{(k)}_{N, 0} & \ldots & \bm{C}^{(k)}_{N, m}
\end{pmatrix}
, 
\\
\ul{D}^{(k)} & := 
\begin{pmatrix}
\bm{D}^{(k)}_{m, m} & \bm{0} & \ldots& \ldots & \ldots& \bm{0}\\
\vdots & \bm{D}^{(k)}_{m+1, m} &\ddots & & & \vdots\\
\bm{D}^{(k)}_{m^2, 0} & & \ddots & \ddots & & \vdots\\
\bm{0} & \ddots & & \ddots & \ddots & \vdots\\
\vdots & \ddots & \ddots & & \ddots & \bm{0}\\
\bm{0} & \ldots & \bm{0} & \bm{D}^{(k)}_{N, 0} & \ldots & \bm{D}^{(k)}_{N, m}
\end{pmatrix},
\end{align*}
with $\ul{C}$, $\ul{D} \in \mathbb{R}^{(d(N-m + 1))\times (d(N-m + 1))}$.
%
Using standard index notation for $\ul{C}^{(k)}$ to reference the above blocks, its inverse is
\begin{align*}
{\ul{C}^{(k)}}^{-1} = 
\begin{pmatrix}
\bm{G}^{(k)}_{1, 1} & \bm{0} & \ldots& \bm{0}\\
\vdots & \ddots &\ddots & \vdots \\
\vdots & & \ddots & \bm{0}\\
\bm{G}^{(k)}_{N-m+1, 1} & \ldots & \ldots & \bm{G}^{(k)}_{N-m+1, N-m+1}
\end{pmatrix},
\\
\bm{G}^{(k)}_{i, j} = 
\begin{cases}
\left(\ul{C}_{i, j}^{(k)}\right)^{-1} ,& \quad i = j \\
- \left(\ul{C}_{i, i}^{(k)}\right)^{-1} \sum_{\ell = 1}^{i - 1} \ul{C}^{(k)}_{i, \ell} \bm{G}_{\ell, j},& \quad i \neq j
\end{cases}.
\end{align*}
The inverse ${\ul{C}^{(k)}}^{-1}$ only requires inverses of its diagonal blocks $\bm{C}^{(k)}_{n,m}$, for which we assume existence, see \eqref{EQ AWR DISCRETE THRM CONDS}. Since the resulting iteration matrix ${\ul{C}^{(k)}}^{-1} \ul{D}^{(k)}$ is block lower-triangular, we get the following forward elimination:
\begin{align}\label{EQ DISCRETE ERR RECURRENCE}
\begin{split}
\bm{e}_m^{(k+1)}
& = {\bm{C}_{m,m}^{(k+1)}}^{-1} \bm{D}_{m,m}^{(k+1)} \bm{e}_m^{(k)}
,\\
\bm{e}_{m+1}^{(k+1)}
& = {\bm{C}_{m+1,m}^{(k+1)}}^{-1} \bm{D}_{m+1,m}^{(k+1)} \bm{e}_{m+1}^{(k)}
\\
& \quad + \left(\bm{G}_{2,1}^{(k+1)} \bm{D}_{m,m}^{(k+1)} + {\bm{C}_{m+1,m}^{(k+1)}}^{-1} \bm{D}_{m+1,m-1}^{(k+1)}\right) \bm{e}_m^{(k)}
,\\
\bm{e}_{m+2}^{(k+1)}
& = {\bm{C}_{m+2,m}^{(k+1)}}^{-1} \bm{D}_{m+2,m}^{(k+1)} \bm{e}_{m+2}^{(k)}
\\
& \quad + \left( \bm{G}_{3,2}^{(k+1)} \bm{D}_{m+1,m-1}^{(k+1)} + {\bm{C}_{m+2,m}^{(k+1)}}^{-1} \bm{D}_{m+2,m-1}^{(k+1)}\right) \bm{e}_{m+1}^{(k)}
\\ & \quad + \left( \bm{G}_{3,1}^{(k+1)} \bm{D}_{m,m}^{(k+1)} + \bm{G}_{3,2}^{(k+1)} \bm{D}_{m+1,m-1}^{(k+1)} + {\bm{C}_{m+2,m}^{(k+1)}}^{-1} \bm{D}_{m+2,m-1}^{(k+1)}\right) \bm{e}_m^{(k)}
,\\
\bm{e}_{m+3}^{(k+1)} & = \ldots
.
\end{split}
\end{align}
We get $\| \bm{e}_m^{(k+1)} \| = 0$, $k \rightarrow \infty$ from $\|{\bm{C}_{m,m}^{(k+1)}}^{-1} \bm{D}_{m,m}^{(k+1)}\| < 1$. With $\bm{e}_m^{(k+1)}$ vanishing and $\|{\bm{C}_{m+1,m}^{(k+1)}}^{-1} \bm{D}_{m+1,m}^{(k+1)}\| < 1$, we get $\| \bm{e}_{m+1}^{(k+1)} \| = 0$, $k \rightarrow \infty$. By induction we get $\| \bm{e}_{n}^{(k+1)} \| = 0$, $k \rightarrow \infty$, for all $n = m, \ldots, N$, which implies $\| \ul{e}^{(k+1)}\| = 0$, $k \rightarrow \infty$.
\end{proof}
The assumption of $\bm{C}_{n,m}^{(k)}$ nonsingular in \eqref{EQ AWR DISCRETE THRM CONDS} is a solvability assumption on the occurring linear systems in \eqref{EQ DISCRETE ITER LMM ERROR}. The growing number of terms on the right hand sides in \eqref{EQ DISCRETE ERR RECURRENCE} shows the potential for initial growth of $\| \ul{e}^{(k)} \|$, even if the iteration matrix may be normal. 

\begin{remark}\label{REM THREE OPTIONS}
Consider Algorithm \mbox{\ref{ALG WR NEW BASE}} with matching time-grids, i.e., $N = N_v = N_w$ and constant relaxation. Then, the splitting matrices ${\bm{M}_{B,n,\ell}^{(k)}}$ resp. $\bm{M}_{A,n,\ell}^{(k)}$ match either those of Jacobi or GS WR for each $\ell, n, k$, due to matching $(\ul{\bm{v}}_*^{(k)})_n$ and $(\ul{\bm{w}}_*^{(k)})_n$, for each $n$, see \eqref{EQ AWR VSTAR}. Thus, one only needs to consider three distinct cases for the matrices \mbox{\eqref{EQ AWR DISCRETE THRM CONDS}}. Each one of these is a convergence requirement for either Jacobi or GS WR. Consequently, time-discrete convergence of Jacobi and GS WR (in either ordering of \mbox{\eqref{EQ WR BASE NONLIN}}) means \mbox{\eqref{EQ AWR DISCRETE THRM CONDS}} is met.
\end{remark}
%
\subsection{Continuous WR with asynchronous communication}
%
We now consider continuous WR methods with $\bm{w}_{\,*}^{(k)}$ and $\bm{v}_{\,*}^{(k)}$ in \eqref{EQ WR ITER 1}, \eqref{EQ WR ITER 2} varying with $t$ and $k$. By straight-forward substitutions of $\hat{\bm{v}}^{(k+1)}$ and $\hat{\bm{w}}^{(k+1)}$ in the variable relaxation steps \mbox{\eqref{EQ VAR RELAX}} into \mbox{\eqref{EQ WR ITER}}, we get the iteration
\begin{align}\label{EQ WR SPLIT VAR ITER}
\begin{split}
&\bm{M}_B^{(k+1)}(t) \dot{\bm{u}}^{(k+1)}(t) + \bm{M}_A^{(k+1)}(t) \bm{u}^{(k+1)}(t)\\
= & \,\,\bm{N}_B^{(k+1)}(t) \dot{\bm{u}}^{(k)}(t) + \bm{N}_A^{(k+1)}(t) \bm{u}^{(k)}(t) + \bm{f}(t)
,\quad \bm{u}^{(k+1)}(0) = \bm{u}_0
,\quad t \in [0, T_f],
\end{split}
\end{align}
with splittings
\begin{equation}\label{EQ WR SPLIT VAR}
\bm{B} = \bm{M}_B^{(k)}(t) - \bm{N}_B^{(k)}(t),
\quad 
\bm{A} = \bm{M}_A^{(k)}(t) - \bm{N}_A^{(k)}(t),
\quad 
\bm{M}_B^{(k)}(t) \text{ nonsingular},
\end{equation}
$t \in [0, T_f]$, $k > 0$. Here, we omit the dependencies on the relaxation matrices for readability.

In the previous section, we considered the convergence of the fully discrete WR iteration \eqref{EQ AWR DISCRETE ITER LMM} for $\Delta t$ fixed for $k \rightarrow \infty$, which gives \eqref{EQ LMM}. Here, we instead discuss the convergence of the continuous iteration \eqref{EQ WR SPLIT VAR ITER} for $k\rightarrow \infty$. Before doing so, we would like to point out that we cannot guarantee that we obtain \eqref{EQ WR SPLIT VAR ITER} from \eqref{EQ AWR DISCRETE ITER LMM} in the limit $\Delta t \rightarrow 0$. The reason is that the splittings chosen and thus the matrices $\bm{M}_{B,n,\ell}^{(k+1)}$ resp. $\bm{M}_{A,n,\ell}^{(k+1)}$ in \eqref{EQ AWR DISCRETE ITER LMM} can change with every time step. This would yield $\bm{M}_B^{(k)}$, $\bm{M}_A^{(k)}$ discontinuous everywhere in the limit and \eqref{EQ WR SPLIT VAR ITER} would not be well defined. 

The typical scenario for \eqref{EQ AWR DISCRETE ITER LMM}, as implemented via Algorithm \ref{ALG WR NEW BASE}, is that splittings match those of Jacobi WR until one subsolver is at least one timestep ahead of another subsolver. From then on, the splitting matrices match those of GS WR. I.e., for a given $k$, $\bm{M}_B^{(k)}$ and $\bm{M}_A^{(k)}$ are piece-wise constant, with a single discontinuity. 

We thus assume that the limit has only a finite number of jumps and consider \eqref{EQ WR SPLIT VAR ITER} in a piece-wise sense with piece-wise Lipschitz-continuous data. This guarantees existence of a piece-wise solution of \eqref{EQ WR SPLIT VAR ITER} for all $k > 0$. Additionally, we assume that splitting matrices corresponding to the same time-point, in the same iteration, are identical. E.g., in \eqref{EQ AWR DISCRETE ITER LMM} both $\bm{M}_{B,n,\ell}^{(k+1)}$ and $\bm{M}_{B,n+1,\ell-1}^{(k+1)}$ correspond to $t_{n+\ell}$. This can be guaranteed by implementation, storing interpolant evaluations. Now we analyze the convergence properties of \eqref{EQ WR SPLIT VAR ITER} under these assumptions.

Consider \eqref{EQ WR BASE LIN GENERAL} with $\bm{A}$, $\bm{B}$, $\bm{f}$ time-dependent and piece-wise Lipschitz-continuous. Then the solution is
\begin{equation}\label{EQ CONT SOL}
\bm{u}(t) = 
\text{e}^{-C(t)} 
\left(
\bm{u}_0
+
\int_0^t \text{e}^{C(s)} \bm{B}^{-1}(s) \bm{f}(s) \intd{s}
\right),
\end{equation}
where
\begin{equation*}
C(t) = \int_0^t \bm{B}^{-1}(s) \bm{A}(s) \intd{s}.
\end{equation*}
We can apply this solution formula to \eqref{EQ WR SPLIT VAR ITER}. Replacing $\dot{\bm{u}}^{(k)}$ via integration by parts and performing lengthy, but straight-forward rearrangements, yield the solution:
\begin{equation}\label{EQ WF ITER NEW SOL}
\bm{u}^{(k+1)}(t) = 
\bm{K}^{(k+1)}(t) \bm{u}^{(k)}(t) 
+ 
\int_0^t \boldsymbol{\mathcal{K}}_c^{(k+1)}(s) \bm{u}^{(k)}(s) \intd{s}
+
\boldsymbol{\varphi}^{(k+1)}(t),
\end{equation}
with
\begin{align}\label{EQ WF ITER NEW SOL KERNELS}
\begin{split}
\bm{K}^{(k)}(t) 
& = 
{\bm{M}_B^{(k)}}^{-1}(t)\bm{N}_B^{(k)}(t), 
\\
\bm{C}^{(k)}(t) 
&= 
\int_0^t {\bm{M}_B^{(k)}}^{-1}(s) \bm{M}_A^{(k)}(s) \intd{s},
\\
\boldsymbol{\mathcal{K}}_c^{(k)}(t) 
&=
\text{e}^{\bm{C}^{(k)}(s) - \bm{C}^{(k)}(t)}
\left( {\bm{M}_B^{(k)}}^{-1}(s) \bm{N}_A(s) 
- \frac{\text{d}}{\intd{s}} \left( \text{e}^{\bm{C}^{(k)}(s)} \right) \bm{K}^{(k)}(s)
- \frac{\text{d}}{\intd{s}} \bm{K}^{(k)}(s)
\right),
\\
\boldsymbol{\varphi}^{(k)}(t)
& =
\text{e}^{- \bm{C}^{(k)}(t)} 
\left(
\left( \bm{I} - \bm{K}^{(k)}(0) \right)\bm{u}_0
+ \int_0^t \text{e}^{\bm{C}^{(k)}(s)} {\bm{M}_B^{(k)}}^{-1}(s) \bm{f}(s) \intd{s}
\right), 
\end{split}
\end{align}
where
\begin{equation*}
\frac{\text{d}}{\intd{s}} \left( \text{e}^{\bm{C}^{(k)}(s)} \right)
=
\int_0^1 \text{e}^{\alpha \bm{C}^{(k)}(s)} \frac{\text{d} \bm{C}^{(k)}(s)}{\intd{s}} \text{e}^{(1 - \alpha) \bm{C}^{(k)}(s)} \intd{\alpha},
\end{equation*}
c.f. \cite{Wilcox1967}. 

Consider the continuous WR error
\begin{equation}\label{EQ AWR ERROR}
\bm{e}^{(k)} := \bm{u}^{(k)} - \bm{u},
\end{equation}
where $\bm{u}$ is the solution to \eqref{EQ WR BASE LIN GENERAL}. Taking the difference between \eqref{EQ WF ITER NEW SOL} and \eqref{EQ CONT SOL} gives:
\begin{equation*}
\bm{e}^{(k+1)}(t)
= \bm{K}^{(k+1)}(t) \bm{e}^{(k)}(t)
+ \int_0^t \boldsymbol{\mathcal{K}}_c^{(k+1)}(s) \bm{e}^{(k)}(s) \intd{s},
\quad 
\bm{e}^{(k+1)}(0) = \bm{0},
\quad t \in [0, T_f].
\end{equation*}
For the following theorem, we define the (vector) function norm $\| \bm{e} \|_{[0, t]} := \sup_{\tau \in [0, t]} \| \bm{e}(\tau)\|$ and (matrix) function norm $\| \bm{A} \|_{[0, t]} := \sup_{\tau \in [0, t]} \| \bm{A}(\tau)\|$, based on the induced matrix norm. This result is an extension of a convergence result in \mbox{\cite{Janssen1996a}}, to the situation where splittings vary with $t$ and $k$.
%
\begin{theorem}
Let splittings \eqref{EQ WR SPLIT VAR} with $\bm{M}_B^{(k)}$, $\bm{M}_A^{(k)}$ and $\bm{e}^{(0)}$ piece-wise Lipschitz-continuous for all $k > 0$ be given. Then, the error \eqref{EQ AWR ERROR} fulfills
%
\begin{equation*}
\| \bm{e}^{(k)} \|_{[0, t]}
\leq
\left( \sum_{j = 0}^k {k \choose j} {K^{\,\max}(t)}^{k-j} {\mathcal{K}_c^{\,\max}(t)}^{j} \frac{t^{\,j}}{j!} \right) \| \bm{e}^{(0)} \|_{[0, t]},
\end{equation*}
%
where $K^{\,\max}(t) := \sup_{k\in \mathbb{N}} \| \bm{K}^{(k)} \|_{[0, t]}$ and $\mathcal{K}_c^{\,\max}(t) := \sup_{k\in \mathbb{N}}\|\boldsymbol{\mathcal{K}}_c^{(k)}\|_{[0, t]}$, c.f. \eqref{EQ WF ITER NEW SOL KERNELS}.
\end{theorem}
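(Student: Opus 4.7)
The plan is to prove the bound by induction on $k$, using the error recurrence
\[
\bm{e}^{(k+1)}(t) = \bm{K}^{(k+1)}(t) \bm{e}^{(k)}(t) + \int_0^t \boldsymbol{\mathcal{K}}_c^{(k+1)}(s) \bm{e}^{(k)}(s)\intd{s}
\]
that was derived just before the theorem. The base case $k=0$ is immediate since the sum collapses to the single $j=0$ term equal to $1$. For the induction step, I would first take norms pointwise in $\tau$ on both sides of the recurrence, use the submultiplicative property of the induced matrix norm, and bound $\|\bm{K}^{(k+1)}(\tau)\|$ and $\|\boldsymbol{\mathcal{K}}_c^{(k+1)}(s)\|$ by $K^{\max}(\tau)$ and $\mathcal{K}_c^{\max}(\tau)$ respectively. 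The key observation is that $K^{\max}$, $\mathcal{K}_c^{\max}$ and $\tau \mapsto \|\bm{e}^{(k)}\|_{[0,\tau]}$ are all nondecreasing in $\tau$, so taking the supremum over $\tau \in [0,t]$ yields the scalar inequality
\[
\|\bm{e}^{(k+1)}\|_{[0,t]} \leq K^{\max}(t)\,\|\bm{e}^{(k)}\|_{[0,t]} + \mathcal{K}_c^{\max}(t) \int_0^t \|\bm{e}^{(k)}\|_{[0,s]}\intd{s}.
\]

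Next, I would substitute the induction hypothesis applied at each $s \in [0,t]$ into this recurrence, using monotonicity of $K^{\max}$, $\mathcal{K}_c^{\max}$, and $\|\bm{e}^{(0)}\|_{[0,\cdot]}$ to replace $s$-dependent quantities by their value at $t$ and pull $\|\bm{e}^{(0)}\|_{[0,t]}$ outside. Writing $K = K^{\max}(t)$ and $\mathcal{K} = \mathcal{K}_c^{\max}(t)$ for brevity, the integral $\int_0^t s^j/j!\intd{s} = t^{j+1}/(j+1)!$ turns the recursion into
\[
\sum_{j=0}^{k} \binom{k}{j} K^{k+1-j} \mathcal{K}^j \frac{t^j}{j!} + \sum_{j=0}^{k} \binom{k}{j} K^{k-j} \mathcal{K}^{j+1} \frac{t^{j+1}}{(j+1)!}.
\]

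The main (and only) bit of combinatorial care comes next: reindex the second sum via $j' = j+1$, so that both sums share the same power pattern $K^{k+1-j}\mathcal{K}^j t^j/j!$ but with coefficients $\binom{k}{j}$ and $\binom{k}{j-1}$. Pascal's rule $\binom{k}{j} + \binom{k}{j-1} = \binom{k+1}{j}$ then combines them for $1 \leq j \leq k$, while the boundary terms $j=0$ and $j=k+1$ agree with $\binom{k+1}{0} = 1$ and $\binom{k+1}{k+1} = 1$. This yields exactly $\sum_{j=0}^{k+1}\binom{k+1}{j}K^{k+1-j}\mathcal{K}^j t^j/j!$ times $\|\bm{e}^{(0)}\|_{[0,t]}$, closing the induction.

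The main obstacle is not any step individually but bookkeeping: being careful to use monotonicity of $K^{\max}(\cdot)$, $\mathcal{K}_c^{\max}(\cdot)$ and $\|\bm{e}^{(k)}\|_{[0,\cdot]}$ in the inner integrand so that the induction hypothesis (a bound on $\|\bm{e}^{(k)}\|_{[0,s]}$) can be cleanly majorized by a sum in $s$ with $t$-dependent constants, after which the index shift and Pascal's identity close everything neatly. The piece-wise Lipschitz-continuity hypothesis plays no role in the algebra and is needed only to ensure the iterates $\bm{u}^{(k)}$ and hence the errors $\bm{e}^{(k)}$ are well defined, so that the recurrence derived from \eqref{EQ WF ITER NEW SOL} actually holds.
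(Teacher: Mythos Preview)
Your proposal is correct and follows essentially the same route as the paper: both derive the scalar recurrence
\[
\|\bm{e}^{(k)}\|_{[0,t]} \le K^{\max}(t)\,\|\bm{e}^{(k-1)}\|_{[0,t]} + \mathcal{K}_c^{\max}(t)\int_0^t \|\bm{e}^{(k-1)}\|_{[0,s]}\,\intd{s}
\]
from the error relation preceding the theorem and then iterate it down to $\bm{e}^{(0)}$. The only difference is presentational: the paper unrolls the recursion in one shot (``repeated application''), observing that choosing the integral branch $j$ times out of $k$ yields the binomial coefficient and the nested integral $\int_0^t\!\cdots\!\int_0^{s_{j-1}} 1\,\intd{s_j}\cdots\intd{s_1} = t^j/j!$, whereas you carry out a formal induction and close it with Pascal's rule. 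These are two phrasings of the same combinatorics, and your version is if anything slightly more explicit about why the binomial coefficients appear.
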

\begin{proof}
To avoid ambiguity for the function norm, we denote the relevant variable by $\tau$.
Following the same principles as in the constant splitting case \cite{Kranenborg2018,Janssen1996a}, applications of the triangle inequality, submultiplicativity and straight-forward upper bounds yield
\begin{align*}
\| \bm{e}^{(k)} \|_{[0, t]}
& = 
\left\| \bm{K}^{(k)} \bm{e}^{(k-1)} + 
\int_0^{\tau} \boldsymbol{\mathcal{K}}_c^{(k)}(s) \bm{e}^{(k-1)}(s) \intd{s} \right\|_{[0, t]} \\
& \leq 
\big\| \bm{K}^{(k)} \bm{e}^{(k-1)} \big\|_{[0, t]}
+ \int_0^t \big\|\boldsymbol{\mathcal{K}}_c^{(k)}(s)\bm{e}^{(k-1)}(s)\big\| \intd{s} \\
%
%
%
& \leq 
\big\| \bm{K}^{(k)} \big\|_{[0, t]} \big\| \bm{e}^{(k-1)} \big\|_{[0, t]}
+ \big\| \boldsymbol{\mathcal{K}}_c^{(k)} \big\|_{[0, t]} \int_0^t \big\|\bm{e}^{(k-1)}\big\|_{[0, t]} \intd{s} 
.
\end{align*}
Repeated application and taking the supremum over $k$ then gives 
\begin{align*}
\| \bm{e}^{(k)} \|_{[0, t]}
& \leq \left( \sum_{j = 0}^k {k \choose j} K^{\,\max}(t)^{k-j} \mathcal{K}_c^{\,\max}(t)^{j} \int \ldots \int 1\,\, \intd{s}_1 \ldots \intd{s}_{j} \right) \| \bm{e}^{(0)} \|_{[0, t]} \\
& \leq
\left( \sum_{j = 0}^k {k \choose j} K^{\,\max}(t)^{k-j} \mathcal{K}_c^{\,\max}(t)^{\,j} \frac{t^{\,j}}{j\,!} \right) \| \bm{e}^{(0)}\|_{[0, t]}.
\end{align*}
\end{proof}
Our results is consistent with the time-discrete result of Theorem \mbox{\ref{THRM AWR DISCRETE CONV}} for $\Delta t \rightarrow 0$, under the aforementioned assumptions. The $\mathcal{K}_c^{\,\max}(t)^{\,j} \frac{t^{\,j}}{j\,!}$ term converges super-linearly for $j \rightarrow \infty$, but can lead to large error bounds for small $j$ and large $t$. The asymptotic convergence rate for $k \rightarrow \infty$ is bounded from above by $\| K^{\,\max}\|_{[0, t]}$.
%
\section{Variable relaxation algorithm for two coupled problems}\label{SEC AWR RELAX}
%
We now provide an algorithm and an implementation for variable relaxation when using asynchronous communication. Theorem \ref{THRM AWR DISCRETE CONV}, which includes variable relaxation, shows that the discrete asymptotic convergence rate is bounded by
\begin{align}\label{EQ AWR RELAX START}
\begin{split}
\max_{n = m, \ldots, N,\,\, k > 0}\left\|{\bm{C}_{n,m}^{(k)}}^{-1} \bm{D}_{n,m}^{(k)}\right\|.
\end{split}
\end{align}
These ${\bm{C}_{n,m}^{(k)}}^{-1} \bm{D}_{n,m}^{(k)}$ are the diagonal blocks of the iteration matrix and they depend on the chosen relaxation. Thus, we choose relaxation to minimize the spectral radii, resp. norms of all diagonal blocks, which minimizes \mbox{\eqref{EQ AWR RELAX START}}. The optimal relaxation depends on the problem and the splitting. We discuss how to determine the specific values for our numerical experiments in Section \mbox{\ref{SEC NUM RELAX}}.

Here, we present an algorithm for the separate processes to determine at runtime which splitting occurs in each timestep. With two coupled problems, there are exactly three cases, corresponding to Jacobi and GS WR, see Remark \mbox{\ref{REM THREE OPTIONS}}. This algorithm does not have a straight-forward extension to more than two coupled problems. We consider relaxation of the subset of unknowns exchanged between the processors. Furthermore, we consider the more general case of non-matching time-grids with constant stepsizes.

In the following we reference the subsolver for a given subproblem as process, not excluding usage of multiple processors to solve a subproblem.
The basic structure for the WR iteration and time-integration are analogous to Algorithm \mbox{\ref{ALG WR NEW BASE}}. The differences will be within interpolation, communication and relaxation.
\begin{definition}
In Algorithm \mbox{\ref{ALG WR NEW BASE}}, we say a process is {\bf ahead} of another process, if all interpolant evaluations in the $n$-th timestep and $k$-th iteration depend on data-points from the $k$-th iteration, rather than the $(k-1)$-st iteration.
\end{definition}
\begin{definition}
We say a timestep has a {\bf local Jacobi shape}, if no process is ahead of another process, and a {\bf local GS shape}, if one process is ahead of the other.
\end{definition}
Our new algorithm for variable relaxation consists of first determining the local shape and then updating the interpolant, using appropriate relaxation. Thus, we first communicate $\hat{\bm{v}}^{(k+1)}_n$ and $\hat{\bm{w}}^{(k+1)}_n$ to a buffer on the respective other process. Then, that process determines the local shape and updates the interpolant data $\ul{\bm{v}}_{\,*}$ resp. $\ul{\bm{w}}_{\,*}$ using appropriate relaxation. With relaxation independent of the local shapes, e.g., Algorithm \mbox{\ref{ALG WR NEW BASE}} or constant relaxation, buffers are not required and one directly updates $\ul{\bm{v}}_{\,*}$ and $\ul{\bm{w}}_{\,*}$.

By comparison with the constant splittings we see the following: Jacobi WR requires no relaxation during time-integration, see \mbox{\eqref{EQ JAC WR}}, since there is no dependency on the new iterate. With GS WR, see \mbox{\eqref{EQ GS WR}}, one only requires relaxation of the incoming data for the process that is not ahead. Thus, we only update the interpolant during time-integration, if the other process is ahead. We determine if a process is ahead, based on additionally communicated \textit{update indicators}. For each data-point, these indicate if it has been remotely updated. We explain the method to determine if the other process is ahead later on.

The principle data-structures involved are shown in Figure \ref{FIG AWR BUFFER}. We denote the time-grids by $\mathcal{T}_v = \{t^{(v)}_n\}_n$ and $\mathcal{T}_w = \{t^{(w)}_n\}_n$, with $\mathcal{T} = \mathcal{T}_v \cup \mathcal{T}_w$.

The local shape and thus resulting relaxation is determined by a single process and thus defined on the time-points of a single time-grid. To ensure consistent relaxation for both interpolants, we define them on the shared time-grid $\mathcal{T}$, rather than the time-grid of the respective other process, as in Algorithm \mbox{\ref{ALG WR NEW BASE}}. The base interpolant data is determined by the time-grid of the respective other process, which we interpolate to the shared grid before relaxation.

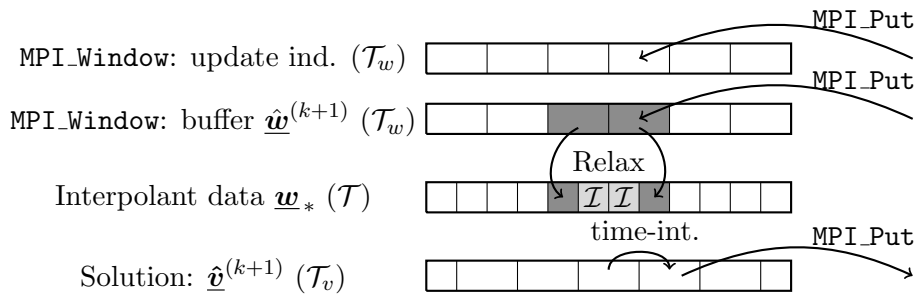
\begin{figure}[ht!]
\begin{center}
\begin{tikzpicture}[scale =0.8]
\tikzmath{\boxlen = 6;}
\tikzmath{\buffery = 0;}
\tikzmath{\indicatory = 1;}
\tikzmath{\interpy = -1.3;}
\tikzmath{\soly = -2.6;}
%
%
%
\draw[thick] (0, \indicatory) -- (\boxlen, \indicatory) -- (\boxlen, \indicatory + 0.5) -- (0, \indicatory + 0.5) -- (0, \indicatory); 
\node[align = right] at (-3.5, \indicatory + 0.25) {\texttt{MPI\_Window}: update ind. ($\mathcal{T}_w$)};
\foreach \x in {1,2,3,4,5}
    \draw (\x, \indicatory) -- (\x, \indicatory +0.5);
\draw[->, thick] (\boxlen + 2, \indicatory+0.25) [out=155, in=25] to (\boxlen -2.5, \indicatory+0.25);
\node at (\boxlen + 1.2, \indicatory + 0.9){\texttt{MPI\_Put}};
%
%
%
\fill[fill=black!45] (3,\buffery) rectangle (4,\buffery+0.5);
\fill[fill=black!45] (2,\buffery) rectangle (3,\buffery+0.5);
\draw[thick] (0, \buffery) -- (\boxlen, \buffery) -- (\boxlen, \buffery + 0.5) -- (0, \buffery + 0.5) -- (0, \buffery); 
\node[align = right] at (-3.5, \buffery + 0.25) {\texttt{MPI\_Window}: buffer $\hat{\ul{\bm{w}}}^{(k+1)}$ ($\mathcal{T}_w$)};

\foreach \x in {1,2,3,4,5}
    \draw (\x, \buffery) -- (\x, \buffery +0.5);
\draw[->, thick] (\boxlen + 2, \buffery+0.25) [out=155, in=25] to (\boxlen -2.5, \buffery+0.25);
\node at (\boxlen + 1.2, \buffery + 0.9){\texttt{MPI\_Put}};
%
%
%
\fill[fill=black!45] (3.5,\interpy) rectangle (4,\interpy+0.5);
\fill[fill=black!45] (2,\interpy) rectangle (2.5,\interpy+0.5);
\fill[fill=black!15] (2.5,\interpy) rectangle (3.5,\interpy+0.5);
\node at (2.75, \interpy+0.25) {$\mathcal{I}$};
\node at (3.25, \interpy+0.25) {$\mathcal{I}$};
\draw[thick] (0, \interpy) -- (\boxlen, \interpy) -- (\boxlen, \interpy + 0.5) -- (0, \interpy + 0.5) -- (0, \interpy); 
\node[align = right] at (-3.5, \interpy + 0.25) {Interpolant data $\ul{\bm{w}}_{\,*}$ ($\mathcal{T}$)};
\foreach \x in {1,2,3,4,5,0.5, 1.5, 2.5, 3.5, 4.5, 5.5}
    \draw (\x, \interpy) -- (\x, \interpy +0.5);
\draw[->, thick] (2.5, \buffery+0.1) [out=205, in=145] to (2.25, \interpy+0.25);
\draw[->, thick] (3.5, \buffery+0.1) [out=-25, in=45] to (3.75, \interpy+0.25);
\node at (\boxlen/2, \buffery - 0.45){Relax};
\draw[thick] (0, \soly) -- (\boxlen, \soly) -- (\boxlen, \soly + 0.5) -- (0, \soly + 0.5) -- (0, \soly); 
\node[align = right] at (-3.5, \soly + 0.25) {Solution: $\ul{\hat{\bm{v}}}^{(k+1)}$ ($\mathcal{T}_v$)};
\foreach \x in {0.5, 1.5, 2.5, 3.5, 4.5, 5.5}
    \draw (\x, \soly) -- (\x, \soly +0.5);
\draw[->, thick] (3, \soly+0.35) [out=90, in=90] to (4, \soly+0.35);
\node at (3.6, \soly+1) {time-int.};
\draw[->, thick] (4.2, \soly+0.25) [out=25, in=155] to (\boxlen + 2, \soly+0.25);
\node at (\boxlen + 1.2, \soly + 0.9){\texttt{MPI\_Put}};
\end{tikzpicture}
\end{center}
\caption{Sketch of the data-structures involved for variable relaxation from the perspective of the process solving \eqref{EQ WR DISCRETE 1}. Iteration indicators and the buffer are \texttt{MPI\_Window} objects to receive asynchronous updates via \texttt{MPI\_Put} from the other process. Their size is determined by the time-grid used on the other process. 
When performing relaxation, we update those data-points of the interpolant that are part of $\mathcal{T}_w$. Any remaining points are computed via interpolation. New time-point solutions are asynchronously communicated to the buffer of the other process, also updating the corresponding update indicators.
}
\label{FIG AWR BUFFER}
\end{figure}

W.l.o.g., consider the timestep from $t_n^{(v)}$ to $t_{n+1}^{(v)}$. To determine if the other process is ahead, we first need to find the \textit{smallest enclosing interval} $[t_-^{(w)}, t_+^{(w)}]$ with $t_-^{(w)}$, $t_+^{(w)}$ $\in \mathcal{T}_w$, such that all evaluations of the interpolant during this timestep depend on discrete data points $\bm{w}^{(k)}_n$ or $\bm{w}^{(k+1)}_n$ corresponding to the time-points $t_-^{(w)}, \ldots, t_+^{(w)}$. We determine if the other process is ahead by checking the update indicator at $t_+^{(w)}$. An example for determining the smallest enclosing interval, on non-matching time-grids, is shown in Figure \mbox{\ref{FIG AWR ENCL INTV}}.
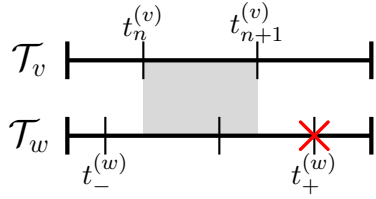
\begin{figure}[h!]
\begin{center}
\begin{tikzpicture}
\fill[fill=black!15] (2,0) rectangle (3.5,1);
\draw [ultra thick](1, 1) -- (5, 1); \node at (0.5, 1) {\Large $\mathcal{T}_v$};
\draw [ultra thick] (1, 1.25) -- (1, 0.75); 
\draw [ultra thick] (5, 1.25) -- (5, 0.75);
\draw [thick] (2, 1.25) -- (2, 0.75); \node at (2, 1.5) {$t_n^{(v)}$};
\draw [thick] (3.5, 1.25) -- (3.5, 0.75); \node at (3.5, 1.5) {$t_{n+1}^{(v)}$};

\draw [ultra thick](1, 0) -- (5, 0); \node at (0.5, 0) {\Large $\mathcal{T}_w$};
\draw [ultra thick] (1, -0.25) -- (1, 0.25); 
\draw [ultra thick] (5, -0.25) -- (5, 0.25);
\draw [thick] (1.5, 0.25) -- (1.5, -0.25); \node at (1.5, -0.5) {$t_-^{(w)}$};
\draw [thick] (3, 0.25) -- (3, -0.25);
\draw [thick] (4.25, 0.25) -- (4.25, -0.25); \node at (4.25, -0.5) {$t_+^{(w)}$};
\node at (4.25, 0) {\huge \color{red} \boldsymbol{$\times$}};
\end{tikzpicture}
\end{center}
\caption{
Consider linear interpolation and a Runge-Kutta scheme with $c_i \in [0, 1]$, then $t_-^{(w)} = \max(\{ t \in \mathcal{T}_w | t \leq t_n^{(v)}\})$ and $t_+^{(w)} = \min(\{ t \in \mathcal{T}_w | t \geq t_{n+1}^{(v)}\})$. The figure visualizes the smallest enclosing interval on non-matching $\mathcal{T}_v$ and $\mathcal{T}_w$. The cross marks the time-point for which we need to check the update indicator, to determine if the other process is ahead.}
\label{FIG AWR ENCL INTV}
\end{figure}

We use \textit{markings} for all discrete timepoints $t \in \mathcal{T}$, tracking relaxation type and if relaxation has been performed. All timepoints are unmarked at the beginning of each iteration.

In a given timestep, we mark all unmarked timepoints within the smallest enclosing interval for appropriate GS relaxation, if the other process is ahead, and for Jacobi relaxation otherwise. Additionally, if the other process is ahead, we perform relaxation on all not previously relaxed data-points for $t \in \mathcal{T}$: $t_-^{(w)} \leq t \leq t_+^{(w)}$, according to their respective markings. Finally, the actual timestep is computed. This procedure is visualized in Algorithm \ref{ALG NEW TIMESTEP}.
\begin{algfloat}[h!]
\begin{center}
\begin{tikzpicture}[node distance=1.25cm,scale=0.8, every node/.style={scale=0.8}]
\node (timestep) [box] {timestep $n$};
\node (sync) [box, below of=timestep] {Sync. buffer + update indicators};
\node (enclosing) [box, below of=sync] {Find enclosing interval};
\node (ahead) [box, below of=enclosing] {other process ahead?};
\node (markGS) [box, below of=ahead, yshift = -0.25cm] {mark points (GS)};
\node (markJAC) [box, left of=markGS, xshift = -3cm] {mark points (Jacobi)};
\node (relax) [box, below of=markGS] {relax data-points};
\node (compute) [box, below of=relax] {compute step};
\node (send) [box, right of=ahead, xshift=2.5cm] {Send data};

\draw [->] (timestep) -- (sync);
\draw [->] (sync) -- (enclosing);
\draw [->] (enclosing) -- (ahead);

\draw [->] (ahead) -- node[anchor = west] {Yes} (markGS);
\draw [->] (markGS) -- (relax);
\draw [->] (relax) -- (compute);

\draw [->] (ahead) -| node[anchor = east] {No} (markJAC);
\draw [->] (markJAC) |- (compute);

\draw [->] (compute) -| (send);
\draw [->] (send) |- node [pos = 0.7, anchor=south] {$n = n + 1$} (timestep);
\end{tikzpicture}
\caption{Timestepping procedure for variable relaxation algorithm with two processes. The synchronization of buffer and update indicators is performed using \texttt{MPI\_Win\_sync}.}
\label{ALG NEW TIMESTEP}
\end{center}
\end{algfloat}
After time-integration, the processes exchange information on which $t \in \mathcal{T}$ received GS relaxation and perform according relaxation on all non-relaxed points (over-ruling any markings for Jacobi relaxation). Afterwards, any remaining Jacobi relaxation is performed.
Lastly, the solutions corresponding to $\bm{v}^{(k+1)}(T_f)$ and $\bm{w}^{(k+1)}(T_f)$ are exchanged to facilitate a consistent termination check, c.f. \eqref{EQ WR TERMINATION CRIT}, on both processes.
%
\section{Numerical results}
%
We consider two conjugate heat transfer examples. The first one is two heterogeneous coupled linear heat equations, which is conform with the linear convergence theory presented in Section \mbox{\ref{SEC AWR CONV}}. We use it to demonstrate convergence of our new method and for a performance comparison with Jacobi and GS WR.

The second example is a gas quenching test case, inspired by \mbox{\cite{birken2012numerical,Birken2010}}, simulating cooling of a hot steel plate with pressurized air. We model the air using the compressible Euler equations and the steel plate via the nonlinear heat equation. We use a partitioned coupling of different spatial discretizations implemented using the packages \texttt{DUNE} \mbox{\cite{Bastian2021}} and \texttt{FEniCS} \mbox{\cite{logg2012_FENICS}}. This example is not conform with the assumptions on Section \mbox{\ref{SEC AWR CONV}}, since it is nonlinear and we use non-matching time-grids. Yet, we demonstrate that our new method is convergent. 
%
\subsection{Coupled heat equations}\label{SEC HEAT}
%
The model equations are
\begin{align}\label{EQ AWR HEAT D}
\begin{split}
\alpha_1 \partial_t u_1(t, \bm{x}) - \lambda_1 \Delta u_1(t, \bm{x}) = 0, 
& \quad (t,\bm{x}) \in (0, T_f] \times \Omega_1,\\
u_1(t, \bm{x}) = 0, 
&\quad (t,\bm{x}) \in [0, T_f] \times \Omega_1\setminus\Gamma,\\
u_1(t, \bm{x}) = u_\Gamma(t, \bm{x}),
&\quad (t,\bm{x}) \in [0, T_f] \times \Gamma,\\
u_1(0, \bm{x}) = u_0(\bm{x}), 
& \quad\bm{x} \in \Omega_1,
\end{split}
\end{align}
and 
\begin{align}\label{EQ AWR HEAT N}
\begin{split}
\alpha_2 \partial_t u_2(t, \bm{x}) - \lambda_2 \Delta u_2(t, \bm{x}) = 0, 
& \quad (t,\bm{x}) \in (0, T_f] \times \Omega_2,\\
u_2(t, \bm{x}) = 0, 
&\quad (t,\bm{x}) \in [0, T_f] \times \Omega_2\setminus\Gamma,\\
\lambda_2 \nabla u_2(t, \bm{x}) \cdot \bm{n}_2 = - \lambda_1 \nabla u_1 (t, \bm{x}) \cdot \bm{n}_1,
&\quad (t,\bm{x}) \in [0, T_f] \times \Gamma,\\
u_2(0, \bm{x}) = u_0(\bm{x}), 
& \quad\bm{x} \in \Omega_2.
\end{split}
\end{align}
Here, $\lambda$ is the thermal conductivity and the thermal diffusivity $D$ is defined by 
\begin{align*}
D = \lambda /\alpha, \quad \mbox{with} \quad \alpha = \rho c_p,
\end{align*}
with density $\rho$ and specific heat capacity $c_p$.

The corresponding monolithic problem is a linear heat equation defined on $\Omega_1 \cup \Omega_2$ with space dependent material parameters that have a jump in $\alpha(\bm{x})$ and $\lambda(\bm{x})$ at the interface. By enforcing continuity of temperature and heat flux at the interface, the above partitioned formulation is equivalent to the monolithic problem, in a weak sense \cite[Chap.7]{Quarteroni1999}.

\begin{figure}[h!]
\begin{center}
\begin{tikzpicture}[scale = 2]
\draw [-] (1,1) -- (3,1) -- (3,2) -- (1,2) -- (1,1);
\draw [-, thick] (2,1) -- (2,2);
\draw [-] (0.9, 1) -- (1, 1); \node at (0.8, 1) {$0$};
\draw [-] (0.9, 2) -- (1, 2); \node at (0.8, 2) {$1$};
\draw [-] (1,1) -- (1, 0.9); \node at (1, 0.8) {$-1$};
\draw [-] (2,1) -- (2, 0.9); \node at (2, 0.8) {$0$};
\draw [-] (3,1) -- (3, 0.9); \node at (3, 0.8) {$1$};

\node at (1.5, 1.3) {$\Omega_1$};
\node at (2.5, 1.3) {$\Omega_2$};

\draw [->] (2,1.5) -- (2.3, 1.5); \node at (2.15, 1.59) {$\bm{n}_1$};
\draw [->] (2,1.6) -- (1.7, 1.6); \node at (1.85, 1.69) {$\bm{n}_2$};
\node at (2.1, 1.1) {$\Gamma$};
\end{tikzpicture}
\end{center}
\caption{Geometry of the coupled heat problem.}
\label{FIG FSI OMEGA}
\end{figure}
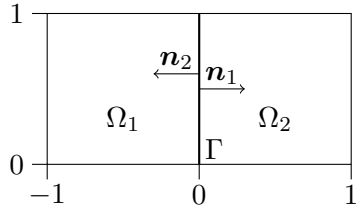
We consider $T_f = 10^4$ and the initial condition $u_0(\bm{x}) = 500\sin(\pi/2 (x_1 + 1)) \sin(\pi x_2)$. See Table \ref{TABLE MATERIALS} for the materials considered here and Figure \ref{FIG FSI OMEGA} for the geometry.
\begin{table}[ht!]
\begin{center}
\begin{tabular}{|c|c|c|}
\hline \textbf{Material} & $\alpha = \rho \cdot c_p [J/(K m^3)]$ & $\lambda [W/(m K)]$ \\
\hline Air & $1.293 \cdot 1005$
 & $0.0243$
 \\
\hline Water & $999.7 \cdot 4192.1
$ & $0.58

$ \\
\hline Steel & $7836 \cdot 443$
 & $48.9$ \\
\hline 
\end{tabular}
\caption{Material parameters.}
\label{TABLE MATERIALS}
\end{center}
\end{table}

The subproblems exchange information in the form of interface temperature $u_{\Gamma} = u_2\big|_{\Gamma}$ and the heat flux $q := \lambda_1 \nabla u_1 \cdot \bm{n}_1$.
%
\subsubsection{Discretizations}
%
We discretize \eqref{EQ AWR HEAT N} in space using linear finite elements implemented using \texttt{FEniCS} \cite{logg2012_FENICS}, on a triangulation obtained from a cartesian grid. We use the Crank-Nicolson method to discretize time. This yields
\begin{align*}
\begin{split}
&\int_{\Omega_2} \alpha_2 (u^{(2)}_{n+1} - u^{(2)}_n) \varphi + \frac{\Delta t}{2} \lambda_2 \nabla \left(u^{(2)}_{n+1} + u^{(2)}_n\right) \cdot \nabla \varphi\, \intd{\bm{x}} \\ 
& + \frac{\Delta t}{2}\int_\Gamma \left(q_{n+1} + q_n\right) \varphi\, \intd{\bm{S}} = 0
,\quad \forall \varphi \in V,
\end{split}
\end{align*}
with an appropriate finite element space $V$. The weak form of \mbox{\eqref{EQ AWR HEAT D}} is obtained by omitting the heat flux $q$ and including the Dirichlet-boundary condition at $\Gamma$.
On $\Omega_1$ we use the above weak form to compute the heat flux $q_{n+1} \approx q(t_{n+1})$, based on $u^{(1)}_{n+1}$, $u^{(1)}_n$ and $q_{n}$. We compute the initial flux $q_0$ required for the interpolant from the initial condition via
\begin{equation*}
q_0 = \lambda_1 \int_\Gamma (\nabla u_0 \cdot \bm{n}_1) \varphi\, \intd{S}.
\end{equation*}
The WR methods are implemented following the partitioned approach, treating the space-discretizations of the subsolvers as black-boxes. That is, instead of exchanging discrete interface unknowns, the subsolvers exchange the interface temperatures resp. heat fluxes corresponding to points at the interface $\Gamma$.
%
\subsubsection{Relaxation}\label{SEC NUM RELAX}
%
We use single parameter relaxation on either one or both of the discrete exchange variables $\bm{u}_{\Gamma}$ and $\bm{q}$, i.e.,
\begin{align*}
\bm{u}_{\Gamma}^{(k+1)}(t)
&=
\Theta(t)^{(k+1)} \hat{\bm{u}}_{\Gamma}^{(k+1)}
+ (1 - \Theta(t)^{(k+1)}) \bm{u}_{\Gamma}^{(k)}, \\
\bm{q}^{(k+1)}(t)
&=
\Theta(t)^{(k+1)} \hat{\bm{q}}^{(k+1)}
+ (1 - \Theta(t)^{(k+1)}) \bm{q}^{(k)},
\end{align*}
with $\Theta(t)^{(k+1)} \in \mathbb{R}$. To perform the algorithm from Section \ref{SEC AWR VAR RELAX}, we require relaxation parameters for Jacobi and GS WR.

Optimal constant relaxation on $\bm{u}_{\Gamma}$ for time-discrete GS WR in the $\Omega_1 \rightarrow \Omega_2$ order has been determined in \cite{Monge2018a,Monge2017}. This was done for 1D linear finite elements on a uniform space discretization and implicit Euler for constant and matching step-sizes. However, results from \cite{Meisrimel} show $\Theta_{opt}$ to be robust, working well in 2D and with the second order in time SDIRK2 scheme. 

Here, we use the results in \cite{Monge2018a} to determine optimal relaxation for the remaining local shapes, i.e., Jacobi and GS in the $\Omega_2 \rightarrow \Omega_1$ order. In the $\Omega_1 \rightarrow \Omega_2$ order, the iteration, including relaxation, for $\bm{u}_\Gamma$ and $\bm{q}$ is given by the following relations \cite{Monge2018a}:
\begin{equation*}
\begin{pmatrix}
\bm{u}_\Gamma \\ \bm{q}
\end{pmatrix}^{(k+1)}
= 
\begin{pmatrix}
(1 - \Theta) \bm{I} - \Theta {\bm{S}^{(2)}}^{-1}\bm{S}^{(1)} & \bm{0} \\
\bm{0} & \bm{S}^{(1)}
\end{pmatrix}
\begin{pmatrix}
\bm{u}_\Gamma \\ \bm{q}
\end{pmatrix}^{(k)}
+
\boldsymbol{\psi}^{(k)}.
\end{equation*}
Here, $\boldsymbol{\psi}^{(k)}$ are additional terms irrelevant to the iteration matrix. In the 1D case, where $\bm{S}^{(m)} \in \mathbb{R}$, the optimal choice is $\Theta_{opt} = \frac{1}{\left| 1 + {\bm{S}^{(2)}}^{-1} \bm{S}^{(1)}\right|}$, yielding a zero spectral radius for the above iteration matrix. Analytical expressions for $\bm{S}^{(m)}$ have been computed in \cite{Monge2018a}, which are
\begin{align*}
\bm{S}^{(m)}
& = 
\frac{6 \Delta t \Delta x (\alpha_m \Delta x^2 + 3 \lambda_m \Delta t) - (\alpha_m \Delta x^2 - 6 \lambda \Delta t)^2s_m}{18 \Delta t \Delta x^3},\\
s_m
& = 
\sum_{i=1}^{N} \frac{3 \Delta t \Delta x^2 \sin^2 (i \pi \Delta x)}{2 \alpha_m \Delta x^2 + 6 \lambda_m \Delta t + (\alpha_m \Delta x^2 - 6 \lambda_m \Delta t) \cos (i \pi \Delta x)}.
\end{align*}
%
Similarly, the relations for GS in the $\Omega_2 \rightarrow \Omega_1$ order, with relaxation on $\bm{q}$, are
\begin{equation*}
\begin{pmatrix}
\bm{u}_\Gamma \\ \bm{q}
\end{pmatrix}^{(k+1)}
= 
\begin{pmatrix}
\bm{0} & -{\bm{S}^{(2)}}^{-1}\\
\bm{0} & (1 - \Theta) \bm{I} -\Theta \bm{S}^{(1)}{\bm{S}^{(2)}}^{-1} & 
\end{pmatrix}
\begin{pmatrix}
\bm{u}_\Gamma \\ \bm{q}
\end{pmatrix}^{(k)}
+
\boldsymbol{\psi}^{(k)}.
\end{equation*}
Relaxation with the same $\Theta_{opt}$ as in the $\Omega_1 \rightarrow \Omega_2$ case yields a zero spectral radius in 1D. Jacobi WR with relaxation on both $\bm{u}_\Gamma$ and $\bm{q}$ using the same $\Theta$ yields
\begin{equation*}
\begin{pmatrix}
\bm{u}_\Gamma \\ \bm{q}
\end{pmatrix}^{(k+1)}
= 
\begin{pmatrix}
(1- \Theta)\bm{I} & -\Theta{\bm{S}^{(2)}}^{-1}\\
\Theta\bm{S}^{(1)} & (1- \Theta)\bm{I}
\end{pmatrix}
\begin{pmatrix}
\bm{u}_\Gamma \\ \bm{q}
\end{pmatrix}^{(k)}
+
\boldsymbol{\psi}^{(k)}.
\end{equation*}
In 1D and with $\bm{S}^{(1)} {\bm{S}^{(2)}}^{-1} > 0$, which is the case for all material combinations here considered, the spectral radius is minimal for $\Theta = 1/(\bm{S}^{(1)} {\bm{S}^{(2)}}^{-1} + 1)$. In particular, the spectral radius with optimal relaxation is
\begin{equation*}
\rho_{opt}^{\text{Jacobi}} = 
\sqrt{\frac{\bm{S}^{(1)} {\bm{S}^{(2)}}^{-1}}{\bm{S}^{(1)} {\bm{S}^{(2)}}^{-1} + 1}}.
\end{equation*}
With $\Delta x = 1/513$, $\Delta t = 5$ and $\alpha_m, \lambda_m$ for air, water and steel, see Table \ref{TABLE MATERIALS}, the spectral radii are $\rho_{\text{air-steel}}^{\text{Jacobi}} = 0.037$, $\rho_{\text{air-water}}^{\text{Jacobi}} = 0.059$ and $\rho_{\text{water-steel}}^{\text{Jacobi}} = 0.528$.
%
\subsubsection{Results}
%
All numerical experiments were run on an Intel i5-2500K 3.30 GHz CPU with Python 3.6.9, Open MPI 2.1.1, \texttt{FEniCS} 2019.2.0.dev0 \cite{logg2012_FENICS}. The code is available at \cite{Meisrimel2020}.

For GS we denote the different orderings of GS WR by "GS\_DN" and "GS\_ND". We use the result for GS in the "DN" ($\Omega_1$ first) order with $TOL_{WR} = 10^{-11}$ as the reference result. Despite the asynchronous method being non-deterministic, results for $5$ simulations showed no notable deviations in the number of iterations, we show the mean result.

First, we consider the error of the interface temperature $\bm{u}_\Gamma$, over $k$, for $TOL_{WR} = 10^{-10}$, $\Delta x = 1/513$ and $N = 200$, resulting in a comparable accuracy in space and time. We use $\bm{u}_\Gamma^{(k)}$ in the discrete interface $\mathcal{L}^2$ norm for the termination check \eqref{EQ WR TERMINATION CRIT} and error computation.

Results in Figure \ref{FIG AWR ERR OVER ITER} show convergence for all considered WR methods, numerically verifying the result of Theorem \mbox{\ref{THRM AWR DISCRETE CONV}}. The convergence rates of our new asynchronous method are in between Jacobi and GS in all test cases. Performance results in Figure \ref{FIG AWR ERR TIME FENICS FENICS} show slight performance improvements compared to the constant splitting WR methods. This is since our new method requires less than twice the number of iterations of GS WR and is parallel.

\begin{figure}[h!]
\includegraphics[width=5.3cm]{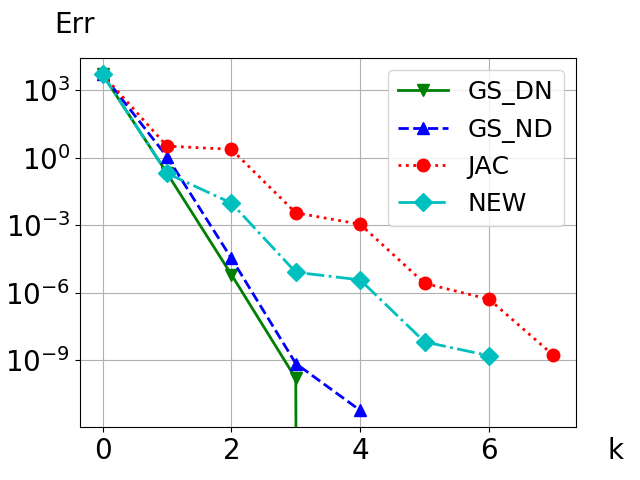}
\includegraphics[width=5.3cm]{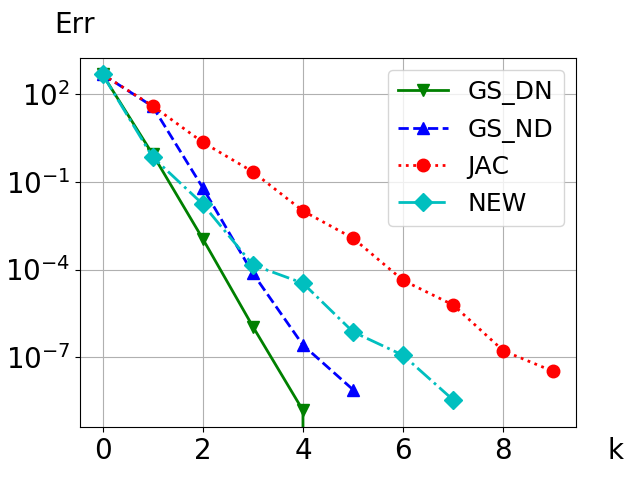}
\includegraphics[width=5.3cm]{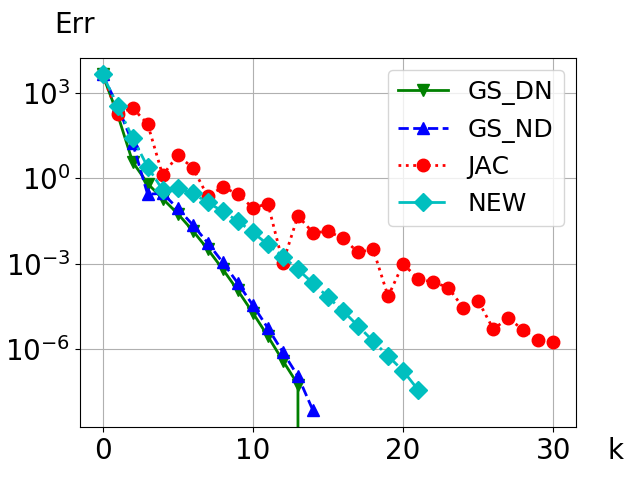}
\caption{\textbf{Left to right}: Air-steel, air-water and water-steel. $k$ marks the number of iterations until \eqref{EQ WR TERMINATION CRIT}, with $TOL_{WR} = 10^{-10}$, is met.}
\label{FIG AWR ERR OVER ITER}
\end{figure}

\begin{figure}[h!]
\includegraphics[width=5.3cm]{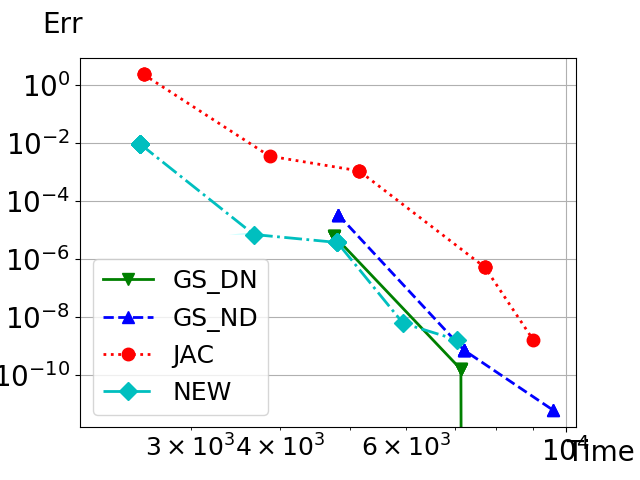}
\includegraphics[width=5.3cm]{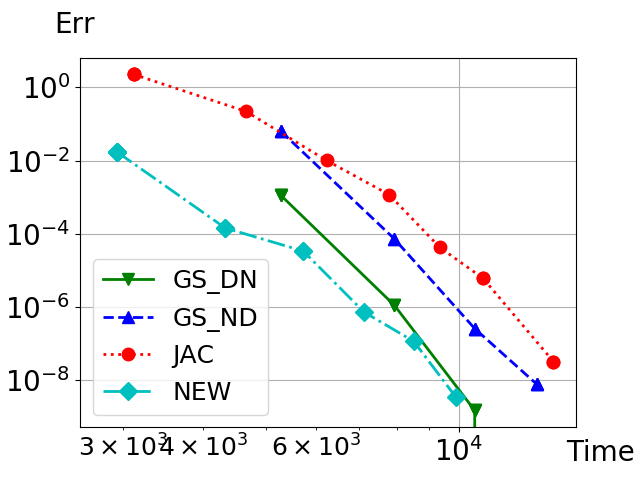}
\includegraphics[width=5.3cm]{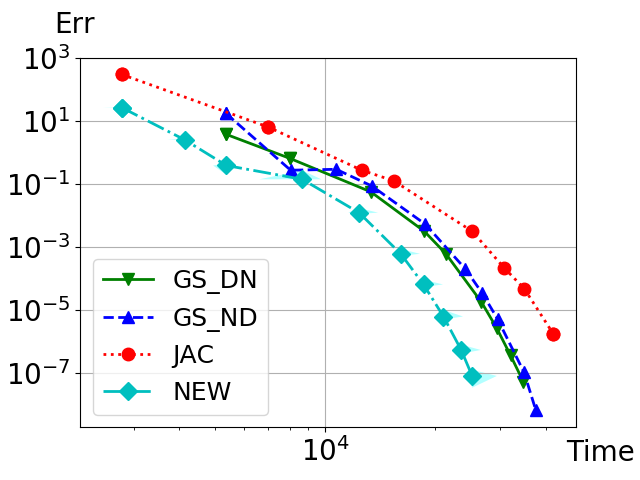}
\caption{\textbf{Left to right}: Air-steel, air-water and water-steel. We use \texttt{MPI\_Wtime} to measure the wall-clock runtime.}
\label{FIG AWR ERR TIME FENICS FENICS}
\end{figure}
%
\subsection{Gas quenching}
%
We now consider an example for the gas quenching application \cite{Yarrington1994,birken2012numerical}; cooling a hot (steel) work-piece with air. We model the fluid (air) via the compressible Euler equations
\begin{align}\label{EQ WR EULER EULER}
\begin{split}
\partial_t \rho + \nabla \cdot \rho \,\bm{v} & = 0, \\
\partial_t \rho v_i + \sum_{j=1}^2 \partial_{x_i} (\rho v_i v_j + \delta_{ij} p) & = 0, \quad i = 1,2, \\
\partial_t \rho E + \nabla \cdot (\rho H \bm{v}) & = 0.
\end{split}
\end{align}
Here $\rho$, $p$ and $v_1,\, v_2$ are density, pressure and velocities. The enthalpy is $H = E + p/\rho$ with total energy (per unit mass) $E = e + |\bm{v}|^2/2$ and specific internal energy $e$. The system is completed with the ideal gas law $p = (\gamma - 1)\rho \, e$, where $\gamma$ is the adiabatic exponent, here $\gamma = 1.4$. The temperature is $T = p/(\rho \cdot R)$ with the specific gas constant $R = 287.058$ for dry air.

We model the solid by the nonlinear heat equation 
\begin{align}\label{EQ WR EULER HEAT EQ}
\begin{split}
\alpha(u) \partial_t u(t, \bm{x}) - \nabla \cdot \left( \lambda(u) \nabla u(t, \bm{x})\right) = 0, 
& \quad (t,\bm{x}) \in (0, T_f] \times \Omega_2,\\
\nabla u(t, \bm{x}) \cdot \bm{n} = 0, 
&\quad (t,\bm{x}) \in [0, T_f] \times \partial\Omega_2\setminus\Gamma,\\
\lambda(u) \nabla u(t, \bm{x}) \cdot \bm{n}_\Gamma = q(t, \bm{x}),
&\quad (t,\bm{x}) \in [0, T_f] \times \Gamma,\\
u(0, \bm{x}) = u_0^{\text{solid}}, 
& \quad\bm{x} \in \Omega_2.
\end{split}
\end{align}
Here, we use the material parameters of 51CrV4 steel from \cite{Quint2011}, which are given by
\begin{align*}
\lambda(u)
& = 40.1 + 0.05 u - 0.0001 u^2 + 4.9e-8 u^3
,\quad
\alpha(u) 
= 7836 c_p(u)
\\
c_p(u) 
&= -10 \text{ln}\left(\left( \text{e}^{c_{p1}(u)/10} + \text{e}^{c_{p2}(u)/10}\right)/2\right),
\\
c_{p1}(u)
&= 34.2 \text{e}^{0.0026 u} + 421.15 
,\quad
c_{p2}(u)
= 956.5 \text{e}^{-0.012(u - 900)} + 0.45u.
\end{align*}
The problem geometry, inspired by \cite{Birken2010}, is shown in Figure \ref{FIG HEAT EULER}.
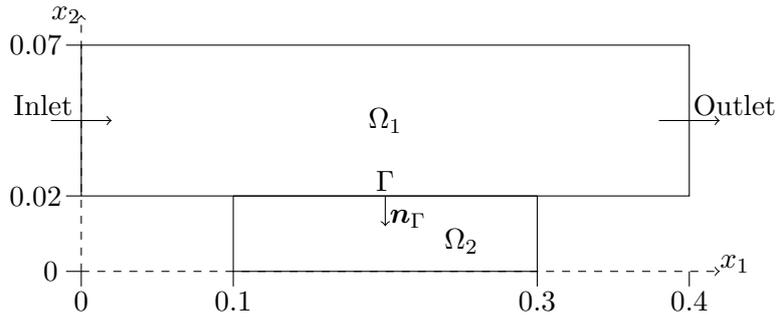
\begin{figure}[h!]
\begin{center}
\begin{tikzpicture}[scale = 2]
\draw [-] (1,1) -- (5,1) -- (5,2) -- (1,2) -- (1,1);
\node at (3, 1.5) {$\Omega_1$};

\draw [-] (2,1) -- (4,1) -- (4,0.5) -- (2,0.5) -- (2,1);
\node at (3.5, 0.7) {$\Omega_2$};

\node at (3, 1.1) {$\Gamma$};
\draw[->] (3, 1) -- (3, 0.8); \node at (3.15, 0.85) {$\bm{n}_{\Gamma}$};

\draw[->] (0.8, 1.5) -- (1.2, 1.5); \node at (0.75, 1.6) {Inlet};
\draw[->] (4.8, 1.5) -- (5.2, 1.5); \node at (5.3, 1.6) {Outlet};
\draw[dashed, ->] (1,0.5) -- (5.2, 0.5); \node at (5.3, 0.55) {$x_1$};
\draw[dashed, ->] (1,0.5) -- (1, 2.2); \node at (0.9, 2.2) {$x_2$};
\draw[-] (1,0.5) -- (1, 0.4); \node at (1, 0.3) {$0$};
\draw[-] (2,0.5) -- (2, 0.4); \node at (2, 0.3) {$0.1$};
\draw[-] (4,0.5) -- (4, 0.4); \node at (4, 0.3) {$0.3$};
\draw[-] (5,0.5) -- (5, 0.4); \node at (5, 0.3) {$0.4$};
\draw[-] (1,0.5) -- (0.9, 0.5); \node at (0.8, 0.5) {$0$};
\draw[-] (1,1) -- (0.9, 1); \node at (0.7, 1) {$0.02$};
\draw[-] (1,2) -- (0.9, 2); \node at (0.7, 2) {$0.07$};
\end{tikzpicture}
\end{center}
\caption{Geometry, not to scale, for the gas quenching test case.}
\label{FIG HEAT EULER}
\end{figure}

In $\Omega_1$ we define $\hat{u}^0_{fluid} := (\rho_0, \rho_0 v_0, 0, E_0)$, with $\rho_0 = 1.225$, $T_0 = 273.15$ and $v_0 = 0.8 \text{Ma}$. We use $\hat{u}^0_{fluid}$ for the inlet boundary and the farfield boundary at the top, on the outlet we extrapolate interior values to obtain a zero flux. On the bottom we employ a slip boundary condition. Additionally, at the interface $\Gamma$ we use $T_\Gamma$, on the left we set the wall temperature to $T_0$ and on the right we use $\nabla T \cdot \bm{n}_\Gamma = 0$.

On $\Omega_1$ we discretize \eqref{EQ WR EULER EULER} in space using a $1$st order finite volume discretization, implemented in \texttt{DUNE} \cite{Bastian2021,Dedner2020}. The is grid shown in Figure \ref{FIG HEAT EULER GRID}. We discretize \eqref{EQ WR EULER HEAT EQ} in space using linear finite elements, implemented using \texttt{FEniCS} \cite{logg2012_FENICS}, on a triangulation obtained from splitting the squares of a cartesian grid with $\Delta x = 0.00125$, matching $\Omega_1$ at the interface. 
\begin{figure}[h!]
\begin{center}
\includegraphics[width = 16cm]{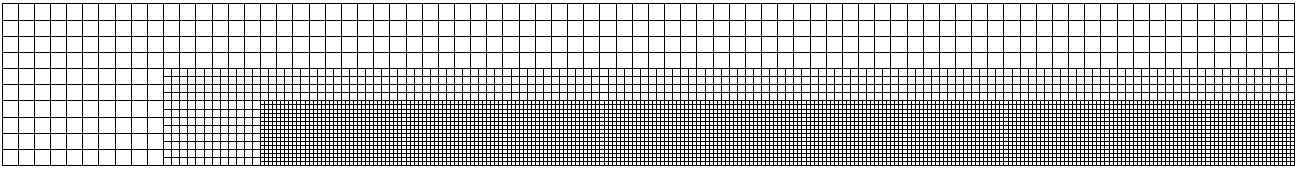}
\end{center}
\caption{Grid used in gas quenching test case. Starting from a cartesian grid with $\Delta x = 0.005$ we perform two steps of local grid refinement on all cells whose centres $(x_1, x_2)$ fulfill the following criteria: First step: $x_1 > 0.05$, $x_2 < 0.03$ and second step: $x_1 > 0.08$, $x_2 < 0.02$. The resulting grid consists of $5132$ cells.}
\label{FIG HEAT EULER GRID}
\end{figure}

For the time-discretizazions, we use the Crank-Nicolson method with $\Delta t = 10^{-2}$, c.f. Section \ref{SEC HEAT}, for the solid. In the fluid we use the SDIRK2 method with $\Delta t = 2 \cdot 10^{-4}$, solving the nonlinear systems using a Jacobian-free Newton-Krylov method with ILU preconditioning. 

We obtain the initial condition $u^0_{fluid}$ on $\Omega_1$ for the coupled simulation as follows: We compute a stationary solution with $T_\Gamma = 900$, starting with initial conditions $\hat{u}^0_{fluid}$ and simulating until $T_f = 0.01$ with a step-size of $\Delta t = 10^{-5}$. On $\Omega_2$ we use the constant initial value $u_0^{\text{solid}} = 900$.
%

The problems are coupled using a Dirichlet-Neumann approach. That is, on $\Omega_1$ we compute the heat-flux $q$ as the interface boundary condition on $\Omega_2$ and on $\Omega_1$, we use the interface temperature $T_\Gamma = u \big|_\Gamma$, computed in $\Omega_2$.

We compute the discrete heat-flux by computing 
\begin{equation}
q = \kappa (T^{\,*}) \nabla T^{\,*} \cdot \bm{n}_\Gamma,
\end{equation}
at the interface, where $T^{\,*}$ is the temperature on $\Omega_1$, linearly reconstructed. The heat conductivity $\kappa$ in the fluid is given via the Sutherland law by
\begin{equation*}
\frac{\kappa(T\,)}{\kappa_{ref}} = \left(\frac{T}{T_{ref}}\right)^{3/2} \frac{T_{ref} + S_k}{T + S_k},
\end{equation*}
with $\kappa_{ref} = 0.0241$, $T_{ref} = 273$ and $S_k = 194$.

For the coupled run, we successively perform WR until $T_f = 5$, on time-windows of length $0.1$. We use the same relaxation as Section \ref{SEC HEAT}, with relaxation parameters for the air-steel material combination, see Table \ref{TABLE MATERIALS}, $\Delta t = 10^{-2}$ and $\Delta x = 0.00125$, which is the cell-size at the interface. 

Here, the computational load of the subsolvers is not well balanced, since the fluid problem has more unknowns and requires smaller step-sizes. As a consequence, our new method is almost identical to GS WR. The load-balancing problem can be resolved by allocating more processors in the space discretiazion of the fluid problem. This is subject to future work and required for a sensible performance comparison with Jacobi and GS WR.

Figure \ref{FIG EULER TIP TEMP} shows the WR updates for the first $3$ time-windows, which shows the iteration converges rapidly, as expected \mbox{\cite{Monge2017}}. For larger tolerances, chosen in accordance with the errors in space and time, one requires at most $3-4$ iterations per time-window.

Figure \ref{FIG EULER TIP TEMP} also shows the temperature at the left top tip of the steel plate over time, showing a steady cooling effect. Figures \ref{FIG EULER RES FLOW} visualizes the temperature at $T_f = 5$. 

\begin{figure}[h!]
\begin{center}
\includegraphics[width = 7.5cm]{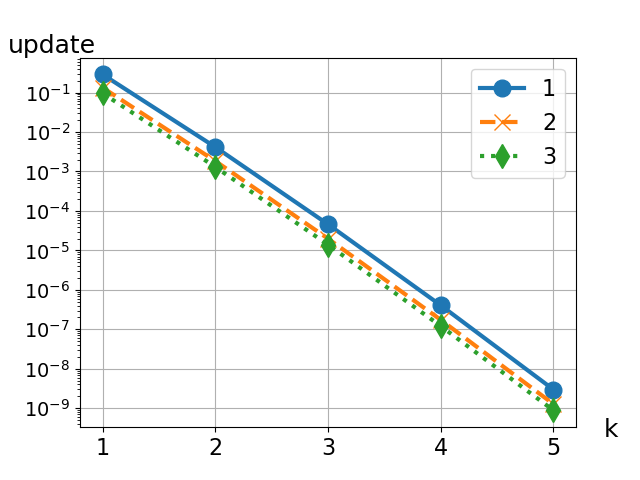}
\includegraphics[width = 7.5cm]{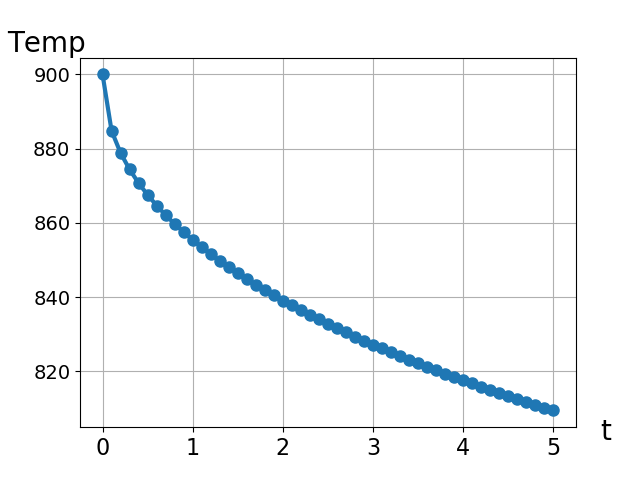}
\end{center}
\caption{\textbf{Left}: Updates of the interface temperatures in the $2$-norm in the first $3$ time-windows. \textbf{Right}: Temperature at the top left tip of the steel plate, $(x_1, x_2) = (0.1, 0.02)$, c.f., Figure \ref{FIG HEAT EULER}, over time.}
\label{FIG EULER TIP TEMP}
\end{figure}

\begin{figure}[h!]
\begin{center}
\includegraphics[width = 15cm]{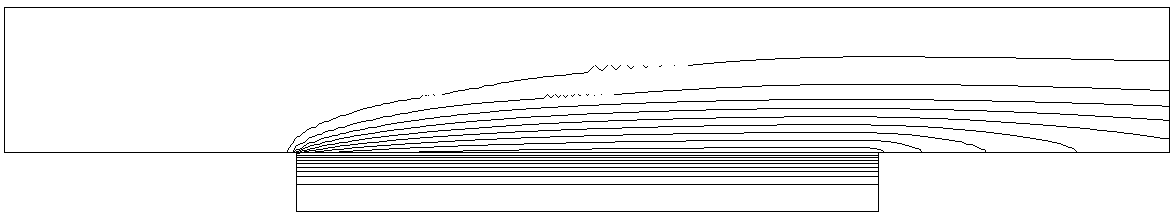}
\end{center}
\caption{Isolines of the temperatures at $T_f = 5$. Here, the isolines in $\Omega_1$ and $\Omega_2$ are distinct.}
\label{FIG EULER RES FLOW}
\end{figure}
%
\section{Summary and conclusions}
%
In this paper we presented a novel parallel WR method utilizing asynchronous communication during time-integration. Here, \texttt{MPI} One-sided communication is instrumental in the implementation of this method. The analytical description and convergence results of our new method extend existing linear WR theory by including splittings variable in time and iteration. We present an algorithm for performing optimal variable relaxation for two coupled problems.

Our new method is unconditionally parallel due to the use of asynchronous communication. If used for coupled problems with poor load-balancing, our method is equivalent to classical Gauss-Seidel WR. 

Numerical results demonstrate convergence of our method, with a convergence rate faster than Jacobi WR. Additionally, a performance comparison with two subsolvers with an approximately equal computational workload, show performance improvements for our new method, in comparison with Gauss-Seidel and Jacobi WR. 

WR methods enable the coupling of subsolvers in a partitioned manner. For our numerical experiments we implemented our PDE subsolvers using the open source packages \texttt{DUNE} \cite{Bastian2021} and \texttt{FEniCS} \cite{logg2012_FENICS}, and developed suitable adapters to facilitate this coupling. This coupling is shown to work well in a conjugate heat transfer test case. There, we couple the compressible Euler equations with a nonlinear heat equation, using finite volume resp. linear finite element discretizations in space.
%
\subsubsection*{Acknowledgments}
We'd like to thank Joachim Hein for help with \texttt{MPI}, Robert Kl\"{o}fkorn for help in implementing the \texttt{DUNE} subsolver and Benjamin Rodenberg for collaboration in developing the coupling to \texttt{FEniCS}.
\bibliographystyle{IEEEtranS}
\bibliography{IEEEabrv,literature.bib}
\end{document}